\DeclareMathOperator*{\argmax}{argmax}
\DeclareMathOperator*{\essinf}{ess\,inf}
\DeclareMathOperator*{\esssup}{ess\,sup}
\newcommand{\R}{\mathbb{R}}
\newcommand{\eps}{\varepsilon}
\newcommand{\Sxeps}{\mathcal{S}_{x,\varepsilon}}
\newcommand{\SXeps}[1]{\mathcal{S}_{{#1},\varepsilon}}
\newcommand{\Sxneps}{\SXeps{x_{n}}}
\newcommand{\Sxteps}{\SXeps{x+t}}
\newcommand{\Syeps}[1]{\mathcal{S}_{{#1},\varepsilon}^{(0)}}
\newcommand{\cC}{\mathcal{C}}
\newcommand{\cP}{\mathcal{P}}
\newcommand{\cS}{\mathcal{S}}
\DeclareMathOperator{\QOT}{\mathsf{QOT}}
\DeclareMathOperator{\OT}{\mathsf{OT}}
\DeclareMathOperator{\Int}{int}
\DeclareMathOperator{\id}{Id}
\newcommand{\qforallq}{\quad\mbox{for all}\quad}
\newcommand{\qandq}{\quad\mbox{and}\quad}
\newcommand{\mykill}[1]{}
\theoremstyle{plain}
\newtheorem{theorem}{Theorem}[section]
\newtheorem{proposition}[theorem]{Proposition}
\newtheorem{lemma}[theorem]{Lemma}
\newtheorem{corollary}[theorem]{Corollary}
\theoremstyle{definition}
\newtheorem{remark}[theorem]{Remark}
\newtheorem{assumption}[theorem]{Assumption}
\theoremstyle{remark}
\begin{document}

\title{ \vspace{-2em}
Sparsity of Quadratically Regularized Optimal Transport: Scalar Case}
\date{\today}
\author{
  Alberto Gonz{\'a}lez-Sanz%
  \thanks{Department of Statistics, Columbia University, ag4855@columbia.edu} \and 
  Marcel Nutz%
  \thanks{Departments of Mathematics and Statistics, Columbia University, mnutz@columbia.edu. Research supported by NSF Grants DMS-2106056, DMS-2407074.}
  }
  
\maketitle \vspace{-2.5em}

\begin{abstract}
The  quadratically regularized optimal transport problem is empirically known to have sparse solutions: its optimal coupling $\pi_{\varepsilon}$ has sparse support for small regularization parameter $\varepsilon$, in contrast to entropic regularization whose solutions have full support for any $\varepsilon>0$. Focusing on continuous and scalar marginals, we provide the first precise description of this sparsity. Namely, we show that the support of $\pi_{\varepsilon}$ shrinks to the Monge graph at the sharp rate $\varepsilon^{1/3}$. This result is based on a detailed analysis of the dual potential $f_{\varepsilon}$ for small $\varepsilon$. In particular, we prove that $f_{\varepsilon}$ is twice differentiable a.s.\ and bound the second derivative uniformly in $\varepsilon$, showing that $f_{\varepsilon}$ is uniformly strongly convex. Convergence rates for $f_{\varepsilon}$ and its derivative are also obtained. 
\end{abstract}

 \vspace{.5em}

{\small
\noindent \emph{Keywords} Optimal Transport; Quadratic Regularization; Sparsity

\noindent \emph{AMS 2020 Subject Classification}
49N10;  %
49N05;  %
90C25 %

}
 \vspace{0em}
\section{Introduction}

For probability measures $\mu_0,\mu_1$ on $\R^{d}$, the optimal transport problem with quadratic cost is 
\begin{equation}
    \label{otIntro}
    \OT:=\inf_{\pi\in \Pi(\mu_0, \mu_1)} \int \frac{1}{2}\|x-y\|^2d\pi(x,y)
\end{equation}
where $\Pi(\mu_0, \mu_1)$ denotes the set of couplings (or transport plans) of $\mu_0,\mu_1$. Given a regularization parameter $\eps>0$, we study the quadratically regularized transport problem where couplings are penalized by the squared $L^{2}$-norm of their density,
\begin{equation}\label{qotIntro}
  \QOT_\eps:=  \inf_{\pi\in \Pi(\mu_0, \mu_1)} \int \frac{1}{2}\|x-y\|^2d\pi(x,y) +\frac{\varepsilon}{2}\left\| \frac{d \pi}{ d(\mu_0\otimes \mu_1)}\right\|^2_{L^{2}(\mu_0\otimes \mu_1)}.
\end{equation}
The basic idea of regularized optimal transport is to approximate the linear problem~\eqref{otIntro} by a strictly convex one. Since the influential work of \cite{Cuturi.13} proposing entropic regularization to enable Sinkhorn's algorithm, applications of optimal transport have exploded in machine learning, statistics, image processing, and other domains where distributions or data sets need to be compared (see, e.g., \cite{PeyreCuturi.19}). Regularization has several purposes in this context: facilitating computation as in \cite{Cuturi.13}, improving sample complexity~\cite{genevay2019sample,MenaWeed.19}, obtaining stable couplings~\cite{BayraktarEcksteinZhang.22, GhosalNutzBernton.21b}, etc. Two regularizations are primarily used; entropic regularization penalizes couplings by the Kullback--Leibler divergence while quadratic regularization penalizes by the $L^{2}$-norm of the density. Entropic regularization (EOT) is the most frequent choice and has the strongest smoothness properties (e.g., \cite{Nutz.20, PeyreCuturi.19}).  This smoothness is linked to the full support property: the support of the optimal coupling equals the one of the product $\mu_{0}\otimes\mu_{1}$ for any value of the regularization parameter, even though the unregularized optimal transport is concentrated on a graph. This can be undesirable (``overspreading'') depending on the application---it may correspond to blurrier images in an image processing task as shown in~\cite{blondel18quadratic} or bias in a manifold learning task as in \cite{ZhangMordantMatsumotoSchiebinger.23}. By contrast, quadratic regularization (QOT) is empirically known to give rise to couplings with \emph{sparse support} for small $\eps$, and can therefore be useful in such situations. While EOT has been studied in hundreds of works in the last few years, QOT is traditionally considered less analytically tractable  and theoretical results are scarce. However, recent and ongoing works show that QOT is much more attractive than expected, including parametric sample complexity~\cite{GonzalezSanzDelBarrioNutz.25} and exponentially fast convergence of several algorithms for its numerical solution~\cite{GonzalezSanzNutzRiveros.25,GonzalezSanzNutzRiveros.26}.

Focusing on the scalar case $d=1$ and continuous (cf.\ \cref{as:main}) marginals $\mu_0,\mu_1$, the present work is the first to describe the sparsity of the optimal coupling $\pi_{\eps}$ for~\eqref{qotIntro}. More generally, we provide a fairly complete quantitative picture of QOT for small regularization parameter~$\eps$. Our main result shows that the support $\cS_{\eps}$ of the optimal coupling $\pi_{\eps}$ shrinks at rate $\eps^{\frac13}$. More precisely, this holds uniformly for all sections of the support: there is $C>0$ such that for small $\eps$,
$$
  {C}^{-1}{\varepsilon^{\frac{1}{3}}}\leq |\Sxeps| \leq C\varepsilon^{\frac{1}{3}}
$$
for all $x$, where $\Sxeps$ is the $x$-section of $\cS_{\eps}$ and $|\Sxeps|$ is its diameter; see \cref{th:rate}. Moreover, $\cS_{\eps}$ approaches the support of the limiting optimal transport $\pi_{0}$ at the same rate, in $L^{2}$-norm. Note that the support of $\pi_{0}$ is the graph of the optimal transport map (Monge map) $T_{0}$ inducing $\pi_{0}$. Denoting by $d$ the Hausdorff distance, 
    $$ {C}^{-1}{\varepsilon^{\frac{1}{3}}}\leq \left\|d\left(\Sxeps , T_0(x)\right)\right\|_{L^{2}(dx)} \leq C\varepsilon^{\frac{1}{3}};
    $$
    see \cref{th:ratesDer}.
Our result has a direct interpretation: it guarantees that the solution of the quadratically regularized problem does not place mass far from the optimal transport map, unlike the solution of EOT. We conjecture that the rate generalizes to $d$-dimensional space as $\varepsilon^{\frac{1}{d+2}}$, however at present we can show this only in special cases (such as the self-transport case $\mu_{0}=\mu_{1}$). Our convergence rate confirms the heuristic link between QOT and the porous media equation recently emphasized in \cite{GarrizMolinaGonzalezSanzMordant.24} (see also \cref{se:lit} below): in the small-time limit $t\to0$, the support of the solution to that equation tends to the support of the initial condition at the same rate $t^{\frac13}$ (for $d=1$, and $t^{\frac{1}{d+2}}$ in general). Separately, we establish an interesting geometric fact that has not been documented before (\cref{pr:monotoneSupport}): like the limiting optimal transport $T_{0}(x)$, the support~$\Sxeps$ is increasing in~$x$ (i.e., for $x<x'$ there is a bijection $\phi: \Sxeps\to\SXeps{x'}$ with $\phi(y)\geq y$).

Our results on the support are based on a detailed analysis of the dual potentials $(f_{\eps},g_{\eps})$ associated with $\pi_{\eps}$ via
\begin{equation*}
     \frac{d\pi_\varepsilon}{d(\mu_0\otimes \mu_1)} (x,y)=\left(\frac{ x y-f_\varepsilon(x)-g_\varepsilon(y) }{\varepsilon} \right)_+.
\end{equation*}
While we refer to  \cref{subse:backgroundNotation} for detailed definitions, one may say that $f_{\eps}$ is the analogue of the convex Kantorovich potential $f_{0}$ inducing Monge's map via its derivative $f_{0}'=T_{0}$.  We show that $f_{\eps}$ is $\cC^{1}$ and twice differentiable a.s., and provide expressions for the derivatives. The main technical result, obtained in tandem with sparsity, is uniform strong convexity. More precisely,
$$  C^{-1} \leq f_\varepsilon''(x) \leq C$$
for small $\eps$; see \cref{th:Reg}. We also obtain rates for $f_\varepsilon\to f_{0}$ and $f'_\varepsilon\to f'_{0}=T_{0}$; see \cref{th:ratesDer} and its corollary. 

The proof of our main results is based on a geometric approach that, to the best of our knowledge, is novel. It is easy to bound the following geometric quantity: the product of the diameter $|\Sxeps|$ and the maximum value of the density $\frac{d\pi_\varepsilon}{d(\mu_0\otimes \mu_1)} (x,\cdot)$ . Thus, we study the relation between these two, which is closely related to the second derivative of the potential. Specifically, we bound $f''_{\eps}$ in terms of a ratio of diameters of $x$-sections and $y$-sections of the support. At the same time, we bound these diameters in term of the second derivatives. Analyzing the powers in those relations yields the desired uniform bounds for all quantities. 

We remark that a generalization of our results to the multivariate case would have substantial consequences for the regularity theory of (unregularized) optimal transport because the Kantorovich potential inherits the properties of $f_{\eps}$. Specifically, this approach may enable boundary regularity results along the lines of \cite{ChenLiuWang.21} without the $\alpha$-H\"older conditions on the marginals that seems to be essential for the Monge--Amp\`ere approach. (That may be a hint that the generalization to $d$ dimensions is not straightforward.) We remark that EOT has been used to obtain regularity results in a similar spirit \cite{ChewiPooladian.23}. However, that approach using covariance inequalities is valid only for marginals with full support on $\R^{d}$, hence cannot yield boundary regularity.

While completing this paper, we learned about concurrent and independent ongoing research by Johannes Wiesel and Xingyu Xu who kindly shared their results (now posted in~\cite{WieselXu.24}). Their work has the same goal of bounding the size of the optimal support and distance to the Monge map for small $\eps$, but their approach does not involve studying the shape of the  potentials in detail. Instead, their arguments start with a relatively direct bound on the maximum of the density based on equations~\eqref{eq:solution}, \eqref{eq:solution2}. This bound is very general, allowing for much less restrictive assumptions than in our work (multivariate setting, non-convex marginal supports, unbounded marginal density, etc.) and leading to the first quantitative results in such a setting. On the other hand, it is not accurate enough to deliver the sharp rate.  
Assuming that the Monge map is Lipschitz, the size of the support is bounded from above by $C\eps^{\frac{1}{2d+2}}$ and the distance to the Monge map by $C\eps^{\frac{1}{4(d+1)^{2}}}$. The sharp rate $\eps^{\frac{1}{d+2}}$ is obtained only when the Monge map is the identity.%

\subsection{Related literature}\label{se:lit}

Quadratically regularized optimal transport was first explored by~\cite{blondel18quadratic} in the discrete setting, with experiments highlighting the sparsity and theoretical results including convergence for small regularization parameter. On the other hand, \cite{EssidSolomon.18} studied quadratic regularization for a minimum-cost flow problem on a graph, including discrete optimal transport as a special case. Sparsity is discussed in several examples. Subsequently, \cite{DesseinPapadakisRouas.18} considered optimal transport with convex regularization, again with quadratic regularization as a special case. The asymptotic questions considered in the present work do not have an exact counterpart in the discrete case. Indeed, it is known from~\cite{MangasarianMeyer.79} that the optimal coupling~$\pi_{\eps}$ converges stationarily to the minimum-norm optimal transport $\pi_{0}$; that is, there exists a threshold $\eps_{0}>0$ such that $\pi_{\eps}=\pi_{0}$ for $\eps\leq \eps_{0}$. While one cannot speak of a convergence rate, the recent work ~\cite{GonzalezSanzNutz.24} can be seen as an analogue, providing the value of the threshold $\eps_{0}$. Also in the discrete case, \cite{GonzalezSanzNutzRiveros.24} investigates whether the support of $\pi_{\eps}$ is monotone in~$\eps$ before reaching~$\eps_{0}$, and finds a negative answer.

In the continuous setting, quadratically regularized optimal transport was first explored from a computational point of view. Several works including \cite{EcksteinKupper.21, GeneveyEtAl.16, GulrajaniAhmedArjovskyDumoulinCourville.17, LiGenevayYurochkinSolomon.20, seguy2018large} approach the dual problem by optimization techniques. For instance, \cite{LiGenevayYurochkinSolomon.20} computes regularized Wasserstein barycenters using neural networks and finds that the quadratic penalty produces sharper results than the entropic. More recently, \cite{ZhangMordantMatsumotoSchiebinger.23} uses quadratic regularization in a manifold learning task related to single cell RNA sequencing and notes that sparsity is crucial to avoid biasing the affinity matrix.

The first work rigorously addressing a continuous setting is~\cite{LorenzMannsMeyer.21}, deriving duality results and presenting two algorithms. %
Their problem formulation differs slightly from~\eqref{qotIntro} in that the $L^{2}$-norm is taken  with respect to the Lebesgue measure rather than $\mu_{0}\otimes\mu_{1}$. We prefer the latter as it is generally applicable and  behaves naturally under approximation by sampling or discretization.  More recently, \cite{Nutz.24} derives the basic results of~\cite{LorenzMannsMeyer.21} in a more general setting and studies the uniqueness of the dual potentials. Noting that the potential must converge to the Kantorovich potential for $\eps\to0$ by the Arzel\`a--Ascoli theorem, \cite{Nutz.24} concludes that the support of $\pi_{\eps}$ converges to the one of $\pi_{0}$. This qualitative convergence result is stated in \cref{pr:qualitative} below. 

Prior to the present work, quantitative results were available only for the convergence $\QOT_{\eps} \to \OT$ of the optimal transport \emph{cost}. Indeed, for a more general class of regularizations, a rate is provided in~\cite{EcksteinNutz.22} based on a quantization argument; the qualitative (Gamma) convergence was previously shown in  \cite{LorenzMahler.22}. Specializing the result of~\cite{EcksteinNutz.22}  to quadratic regularization yields $C^{-1} \eps^{\frac{2}{d+2}}\leq \QOT_{\eps} - \OT \leq C \eps^{\frac{2}{d+2}}$. The very recent analysis of \cite{GarrizMolinaGonzalezSanzMordant.24} shows that this power is the exact leading term and identifies the constant:
\begin{equation*}
    \lim_{\varepsilon\to 0}  \frac{\QOT_\eps -\OT }{\varepsilon^{\frac{2}{d+2}}}  = C_d  \int_{\R^{d}} \big(u_0(x)u_1(T_0(x))\big)^{-\frac{1}{(d+2)}}d \mu_0(x)
\end{equation*}
where $C_{d}$ is an explicit dimensional constant and $u_{i}$ is the density of $\mu_{i}$. This result is obtained by establishing a link to the porous media equation $\partial_t u =\Delta u^2 $. Namely, an approximate solution of QOT is obtained by modifying the Barenblatt profile which is a fundamental solution of the porous media equation (see \cite{Vazquez.07}). Based on the shape of the approximate solution in \cite{GarrizMolinaGonzalezSanzMordant.24} one can conjecture that support of the true solution also shrinks at rate $\varepsilon^{\frac{1}{d+2}}$, which is confirmed by our results.

Finally, we mention that the limit $\eps\to0$ has been studied in detail for the EOT problem, starting with \cite{CominettiSanMartin.94} for discrete and~\cite{Mikami.02, Mikami.04} (see also \cite{ChenGeorgiouPavon.16, Leonard.12}) for continuous marginals. See \cite{AltschulerNilesWeedStromme.21,Berman.20,BerntonGhosalNutz.21, CarlierDuvalPeyreSchmitzer.17, ConfortiTamanini.19, NutzWiesel.21, Pal.19}, among others. Of course, the optimal support has not been of much interest in EOT, as it is simply the support of $\mu_{0}\otimes\mu_{1}$. There is a direct analogy between the role of the heat equation in \cite{Pal.19} for EOT and the porous media equation in \cite{GarrizMolinaGonzalezSanzMordant.24} for QOT. In the present work, our arguments follow a novel route. 

\vspace{.8em}

The remainder of this paper is organized as follows. \Cref{se:main} details notation and background, then states the main results.  \Cref{sec:convex} establishes convexity and determines the first derivative of the potential $f_{\eps}$, whereas \cref{se:SecondOrderDer} is devoted to the second derivative. The main part is \cref{se:Bounds2ndDerivative}, bounding the second derivative and the diameter of the support. Finally, \cref{section:ratesPotentials} details how those results imply the  convergence rate to unregularized optimal transport.

\section{Main Results}\label{se:main}

\subsection{Background and Notation}\label{subse:backgroundNotation}

Let $\mu_0, \mu_1\in \mathcal{P}(\R)$ be probability measures on $\R$ with finite second moments and $\Pi(\mu_0, \mu_1)$ their set of couplings; i.e., $\pi\in \mathcal{P}(\R\times \R)$ such that $\pi(\cdot\times \R)=\mu_0(\cdot)$ and $\pi(\R\times \cdot)=\mu_1(\cdot)$. Given $\varepsilon>0$, the quadratically regularized optimal transport problem between $\mu_0$ and $\mu_1$ is
\begin{equation}
    \label{qot}
  \inf_{\pi\in \Pi(\mu_0, \mu_1)} \int \frac{1}{2}(x-y)^2d\pi(x,y) +\frac{\varepsilon}{2} \int \left( \frac{d \pi}{ d(\mu_0\otimes \mu_1)} (x,y)\right)^2 d(\mu_0\otimes \mu_1)(x,y).
\end{equation}
We refer to \cite{Nutz.24} for the following facts. The problem~\eqref{qot} has a unique minimizer, denoted $\pi_{\varepsilon}\in\Pi(\mu_0, \mu_1)$. The dual problem of~\eqref{qot} is
\begin{multline}
    \label{dual}
    \sup_{(f,g)\in L^2(\mu_0)\times L^2(\mu_1)} \int f(x) d\mu_0(x)+\int g(y) d\mu_1(y)\\-\frac{1}{2 \varepsilon }\int{  \left({f(x)+g(y)-\frac{(x-y)^2}{2}} \right)_+^2 }d(\mu_0\otimes \mu_1)(x,y).
\end{multline}
Given any solution $(\tilde f,\tilde g)$ of \eqref{dual}, we have the ``optimality condition''
\begin{equation}
    \label{qdensity1}
    \frac{d\pi_\varepsilon}{d(\mu_0\otimes \mu_1)} (x,y)=\left(\frac{\tilde f(x)+ \tilde g(y) -\frac{1}{2}(x-y)^2}{\varepsilon} \right)_+.
\end{equation}
To be in line with convex analysis, it will be convenient to reparametrize the solution as 
\begin{align}\label{ex:reparam}
  \tilde f(x) = \tfrac12 x^2 -f_\varepsilon(x), \qquad \tilde g(x) = \tfrac12y^2 -g_\varepsilon(x),
\end{align} 
so that~\eqref{qdensity1} becomes
\begin{equation}
    \label{qdensity}
    \frac{d\pi_\varepsilon}{d(\mu_0\otimes \mu_1)} (x,y)=\left(\frac{ x y-f_\varepsilon(x)-g_\varepsilon(y) }{\varepsilon} \right)_+.
\end{equation}
Under \cref{as:main} below, the ``potentials'' $(f_\varepsilon, g_\varepsilon)$ are unique (a.s.) up to a parallel shift; that is, up to changing $(f_\varepsilon, g_\varepsilon)$ to $(f_\varepsilon + \alpha, g_\varepsilon-\alpha)$ for some $\alpha\in\R$. Moreover, we may choose $(f_\varepsilon, g_\varepsilon)$ to be (everywhere defined and) Lipschitz continuous on~$\R$. See \cite[Theorem 3.7, Lemma~2.5]{Nutz.24}.

Next, we introduce standard notation and results for the (unregularized) optimal transport problem (see \cite{Villani.09}),
\begin{equation}
    \label{ot}
    \inf_{\pi\in \Pi(\mu_0, \mu_1)} \int \frac{1}{2}(x-y)^2d\pi(x,y).
\end{equation}
For absolutely continuous marginal $\mu_{0}$, this problem has a unique solution $\pi_{0}\in \Pi(\mu_0, \mu_1)$. Moreover, $\pi_{0}=(\id\times T_0)_\# \mu_0$ where $T_{0}:\R\to\R$ is the optimal transport map, often called Monge map. Here $\#$ denotes push-forward, $f_{\#}(\mu) := \mu(f^{-1}(\cdot))$. In the present scalar case, it holds that $T_0=F_1^{-1}\circ F_0$, where $F_i(x)=\mu_i((-\infty, x])$ is the cumulative distribution function of $\mu_i$. On the other hand, $T_0= f'_{0}$ is the derivative of a convex function $f_{0}$ called the Kantorovich potential. Namely, $(f_0,g_0)$ solves  the dual problem
\begin{equation}
    \label{dualSemi}
    \inf_{(f,g): \ f(x)+g(x)\geq xy} \int f (x) d\mu_0(x)+\int g(y) d\mu_1(y).
\end{equation}
Here we have applied the same reparametrization as in~\eqref{ex:reparam} to work directly with convex functions---indeed, $(f_0,g_0)$ is the limit of $(f_\varepsilon, g_\varepsilon)$ from~\eqref{qdensity}. Again, $(f_0,g_0)$ is unique up to parallel shift under \cref{as:main} below.

Some more notation will be useful. For brevity,
\begin{align*}
  \mbox{$|B|$ denotes the Lebesgue measure of a Borel set $B\subset\R$.}
\end{align*} 
We will use this primarily when $B$ is an interval, so that $|B|$ is also the length.
As usual,  the essential supremum of $f:\Omega_0\to \R$  is 
$
\esssup_{\Omega_0} f = \inf \{ M \in \R: |\{ x \in \Omega_0 : f(x) > M \}| = 0 \},
$
and similarly $\essinf_{\Omega_0} f = -\esssup_{\Omega_0} (-f)$. Finally, $\chi_{A}$ denotes the (measure-theoretic) indicator function, $\chi_{A}(x)=1$ for $x\in A$ and $\chi_{A}(x)=0$ for $x\notin A$.

\subsection{Results}

All results are stated under the following standard condition on the marginals.

\begin{assumption}\label{as:main}
  The marginals $\mu_{i}\in\cP(\R)$ are compactly supported and have continuous densities bounded from above and below. That is,  for $i=0,1$, there are compact intervals $\Omega_i =[a_i, b_i]\subset \R$ and $u_i\in \mathcal{C}(\Omega_i)$ such that $d\mu_i= u_i dx\in  \mathcal{P}(\Omega_i)$ and 
\begin{align}\label{eq:asmainBounds}
0< \lambda \leq  u_i \leq \Lambda <\infty\quad \text{on}\ \Omega_i
\end{align}
for some constants $\lambda,\Lambda$.
\end{assumption} 

The results below are stated for the potentials $f_{\eps}$ and the $x$-sections of the support. As \cref{as:main} is symmetric, the same results hold for $g_{\eps}$ and the $y$-sections. 
Our first result establishes the regularity of the potential $f_{\eps}$ and most importantly its \emph{strong convexity} uniformly in $\eps$.  

\begin{theorem}[Regularity]\label{th:Reg}
    There exist $C,\varepsilon_0>0$ such that for every $0<\varepsilon<\varepsilon_0$,
\begin{enumerate}
    \item  $f_\varepsilon$ is convex and in $\mathcal{C}^{1}(\Omega_0)$ (see \cref{ContinuityofT} for a formula for $f'_{\eps}$);
        \item $f_\varepsilon$ is twice differentiable a.e.\footnote{In fact, twice differentiable except at two particular points $x^{(m)}, x^{(M)}$; see \cref{se:SecondOrderDer}.} (see \cref{pr:fDerivativeFormula} for a formula for $f''_{\eps}$);
    \item $f_\varepsilon$ is uniformly strongly convex with\footnote{Similarly, one can take the pointwise $\sup$ and $\inf$ over $\Omega_0\setminus \{x^{(m)}, x^{(M)}\}$.}
    $$ \esssup_{\Omega_0} f_\varepsilon'' \leq C \quad \text{and} 
    \quad  \essinf_{\Omega_0} f_\varepsilon'' \geq C^{-1}.$$
\end{enumerate}
The constant~$C$ depends only on the constants and the moduli of continuity of $u_{i}$ in \cref{as:main}. 
\end{theorem}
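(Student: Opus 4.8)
The plan is to exploit the optimality condition \eqref{qdensity} and the fact that $f_\varepsilon, g_\varepsilon$ are Lipschitz Kantorovich-type potentials to set up a self-improving bootstrap. Write $\cS_{x,\varepsilon} = \{y : xy - f_\varepsilon(x) - g_\varepsilon(y) > 0\}$ for the $x$-section of the support and $\cS_{y,\varepsilon}^{(0)}$ for the $y$-section. First I would establish convexity of $f_\varepsilon$: from the marginal constraint $\int (\,xy-f_\varepsilon(x)-g_\varepsilon(y)\,)_+ \, d\mu_1(y) = \varepsilon$ for $\mu_0$-a.e.\ $x$, differentiating in $x$ (after justifying the differentiation via the Lipschitz bound) gives $f_\varepsilon'(x) = \frac{\int_{\cS_{x,\varepsilon}} y \, d\mu_1(y)}{\mu_1(\cS_{x,\varepsilon})}$, a weighted barycentre of the section; monotonicity in $x$ of this expression (the section moves to the right as $x$ increases, a consequence of the structure of the problem) yields convexity and the $\cC^1$ formula of item (i). This is essentially the content referenced as \cref{ContinuityofT}.

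Next, for item (ii), I would differentiate the formula $f_\varepsilon'(x) = \EE_{\mu_1}[\,Y \mid Y \in \cS_{x,\varepsilon}\,]$ once more. Since $\cS_{x,\varepsilon}$ is (up to the boundary behaviour at the two special abscissae $x^{(m)}, x^{(M)}$ where the section touches the edge of $\Omega_1$) an interval with endpoints moving Lipschitz-continuously in $x$, the section is a.e.\ differentiable as a set, and the chain rule produces
$$
f_\varepsilon''(x) = \frac{1}{\varepsilon}\,\mathrm{Var}_{\mu_1}\!\big(Y \mid Y \in \cS_{x,\varepsilon}\big)\cdot(\text{density factor}),
$$
the precise form being \cref{pr:fDerivativeFormula}. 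The key qualitative point is that $f_\varepsilon'' \geq 0$ (reconfirming convexity) and that $f_\varepsilon''$ is controlled by $|\cS_{x,\varepsilon}|^2$ times the ratio of $\mu_1$-densities to the normalizing constant $\varepsilon$; symmetrically $g_\varepsilon''$ is controlled by $|\cS_{y,\varepsilon}^{(0)}|^2/\varepsilon$.

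For item (iii) — the heart of the theorem — the idea is the geometric argument described in the introduction: couple the bound on $f_\varepsilon''$ to a bound on the diameters $|\cS_{x,\varepsilon}|$ and analyze the exponents. Using $0<\lambda\leq u_i\leq\Lambda$, one shows that the maximum of the density on an $x$-section, call it $M_x := \max_y \frac{d\pi_\varepsilon}{d(\mu_0\otimes\mu_1)}(x,y)$, satisfies $M_x \cdot |\cS_{x,\varepsilon}| \asymp 1$ (both bounds from the mass-$\varepsilon$ constraint $\int_{\cS_{x,\varepsilon}}(\cdots)_+ d\mu_1 = \varepsilon$, the density being roughly affine in $y$ so its integral is of order $M_x |\cS_{x,\varepsilon}| \cdot \varepsilon / \varepsilon$; careful: one gets $M_x |\cS_{x,\varepsilon}| \asymp 1$). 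At the same time, the slope of the affine density $y\mapsto (xy - f_\varepsilon(x) - g_\varepsilon(y))/\varepsilon$ is $(x - g_\varepsilon'(y))/\varepsilon$, and its variation across the section is $M_x$; since $g_\varepsilon$ is strongly convex with $g_\varepsilon'' \geq c$, the section endpoints are determined by where $x - g_\varepsilon'(y)$ changes sign, giving $|\cS_{x,\varepsilon}| \asymp \varepsilon M_x / \inf g_\varepsilon''$ — wait, rather $|\cS_{x,\varepsilon}| \cdot \inf g_\varepsilon'' \lesssim \varepsilon M_x \lesssim |\cS_{x,\varepsilon}|\cdot\sup g_\varepsilon''$. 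Combining with $M_x|\cS_{x,\varepsilon}|\asymp 1$ yields $|\cS_{x,\varepsilon}|^2 \asymp \varepsilon / g_\varepsilon''$, i.e., $f_\varepsilon'' \asymp |\cS_{x,\varepsilon}|^2/\varepsilon \asymp 1/g_\varepsilon''$, and symmetrically $g_\varepsilon'' \asymp 1/f_\varepsilon''$. These two relations force $f_\varepsilon'' g_\varepsilon'' \asymp 1$ with both bounded above and below by absolute constants — this is the exponent-balancing step. The main obstacle, and where I expect the real work to lie, is making this circular chain of estimates rigorous and uniform: one must (a) handle the two endpoint abscissae $x^{(m)}, x^{(M)}$ where a section abuts $\partial\Omega_1$ and the "affine density vanishing at both endpoints" picture breaks — there $|\cS_{x,\varepsilon}|$ could a priori be smaller, and one needs a separate (one-sided) argument; (b) control the moduli of continuity of $u_i$ so that "affine" is only an approximation with controlled error, which is why $C$ depends on the moduli and not just $\lambda,\Lambda$; and (c) set up the bootstrap so that it is not vacuously circular — e.g.\ start from the crude a priori bounds $c\varepsilon^{1/3}\leq |\cS_{x,\varepsilon}|$ coming from the mass constraint plus $\sup f_\varepsilon'' < \infty$ (finiteness from Lipschitzness of $f_\varepsilon'$, which itself needs the $\cC^1$ claim), then iterate the two-sided inequalities to pin down the constants. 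Once $|\cS_{x,\varepsilon}| \asymp \varepsilon^{1/3}$ and $f_\varepsilon'' \asymp 1$ are established uniformly, item (iii) follows, and the dependence of $C$ on only $\lambda, \Lambda$ and the moduli of continuity is read off from the estimates.
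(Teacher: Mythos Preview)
Your overall architecture matches the paper---derive the $\cC^1$ formula, differentiate once more, then couple $f_\varepsilon''$ to the section diameters and close a bootstrap---but two concrete errors in the heuristic prevent the bootstrap from closing.

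First, the formula for $f_\varepsilon''$ is not a conditional variance divided by $\varepsilon$. Differentiating $f_\varepsilon'(x)=\int_{\cS_{x,\varepsilon}} y\,d\mu_1/\mu_1(\cS_{x,\varepsilon})$ via Leibniz brings in the velocities of the endpoints $y_m(x),y_M(x)$, which by the implicit function theorem are $y_m'(x)=-(y_m-f_\varepsilon')/(x-g_\varepsilon'(y_m))$ and similarly for $y_M$. The outcome (\cref{pr:fDerivativeFormula}) is
\[
f_\varepsilon''(x)\ \asymp\ \frac{(f_\varepsilon'(x)-y_m(x))^2}{(x-g_\varepsilon'(y_m(x)))\,|\cS_{x,\varepsilon}|}\ +\ \text{(symmetric term)}\ \asymp\ \frac{|\cS_{x,\varepsilon}|}{|\cS^{(0)}_{y,\varepsilon}|},
\]
a \emph{ratio of section diameters} with no $1/\varepsilon$. (Under the final scaling $|\cS_{x,\varepsilon}|\asymp\varepsilon^{1/3}$, your formula $|\cS_{x,\varepsilon}|^2/\varepsilon$ would give $f_\varepsilon''\asymp\varepsilon^{-1/3}$, not $\asymp 1$.)

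Second, and more seriously, your closing relation $f_\varepsilon'' g_\varepsilon''\asymp 1$ bounds nothing: it is perfectly consistent with $\sigma_M(f_\varepsilon)\to\infty$ and $\sigma_m(g_\varepsilon)\to 0$. What makes the paper's bootstrap contract is a \emph{sub-unit exponent}. The density $y\mapsto\xi(x,y)/\varepsilon$ is not ``roughly affine'' but concave with curvature $-g_\varepsilon''/\varepsilon$, so the drop from peak $M_x$ to zero over half-width $|\cS_{x,\varepsilon}|/2$ gives $\varepsilon M_x\asymp g_\varepsilon''|\cS_{x,\varepsilon}|^{2}$ (square, not first power); combined with $M_x|\cS_{x,\varepsilon}|\asymp 1$ this yields $|\cS_{x,\varepsilon}|^{3}\asymp\varepsilon/g_\varepsilon''$. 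The paper actually obtains the companion relation $|\cS_{x,\varepsilon}|^{3}\asymp\varepsilon/f_\varepsilon''$ by Taylor-expanding $\xi(x',f_\varepsilon'(x))$ in $x'$ and integrating against $\mu_0$ (this requires \cref{Lemma:comparemax} to pass from $\max_y\xi(x,y)$ to $\max_{x'}\xi(x',f_\varepsilon'(x))$). Plugging the cube-root bounds into $f_\varepsilon''\asymp|\cS_{x,\varepsilon}|/|\cS^{(0)}_{y,\varepsilon}|$ produces
\[
\sigma_M(f_\varepsilon)\ \leq\ C\Big(\frac{\sigma_M(g_\varepsilon)}{\sigma_m(f_\varepsilon)}\Big)^{1/3},\qquad \sigma_m(f_\varepsilon)\ \geq\ C^{-1}\Big(\frac{\sigma_m(g_\varepsilon)}{\sigma_M(f_\varepsilon)}\Big)^{1/3},
\]
together with the two symmetric inequalities for $g_\varepsilon$. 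Substituting these four into one another (each step cubes the root again) yields $\sigma_M(f_\varepsilon)\leq C\,\sigma_M(f_\varepsilon)^{4/49}$, which forces $\sigma_M(f_\varepsilon)\leq C'$; the lower bound on $\sigma_m$ follows similarly. It is precisely the exponent $1/3<1$ that makes this iteration contractive---your exponent-$1$ coupling cannot do this, so the ``exponent-balancing step'' you flag as the heart of the argument needs to be redone with the correct powers.
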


The condition that $\eps$ be small is essential for strong convexity. In fact, $f_\varepsilon$ is affine for large $\eps$; see \cref{rk:2ndDerivative}.

\begin{remark}[Necessity of \cref{as:main}]
Given the uniform convergence $f_{\eps}\to f_{0}$ (see \cite{Nutz.24} or below), the assertion of \cref{th:Reg} implies that the transport map $T_{0}=f'_{0}$ is Lipschitz with (a.s.\ defined) derivative bounded away from $0$ and infinity. Recall that $T_0=F_1^{-1}\circ F_0$ in the present scalar case, whence  the marginals $\mu_{i}$ have densities bounded above and below. In that sense, \eqref{eq:asmainBounds} is necessary for \cref{th:Reg} to hold. (The continuity of the densities could potentially be relaxed.)
\end{remark}

\begin{remark}[Multivariate case]
	We can extend \cref{th:Reg}\, (i),(ii) to the multivariate case. However, the key result (iii) is much more delicate, which is why we leave the multivariate case for future work. We observe that (iii) implies in particular a similar regularity result for the Kantorovich potential. It is known that the (boundary) regularity of the Kantorovich potential is a very deep result in the multivariate case  \cite{Caffarelli.96,Urbas.97,ChenLiuWang.21}.
\end{remark}

Our second and main result quantifies the sparsity of the optimal coupling $\pi_{\eps}$. Specifically, we describe the diameter of the sections of the support,
$$ \Sxeps :=\{ y \in \Omega_1: \ xy - f_\varepsilon( x ) - g_\varepsilon( y ) \geq 0 \}.$$
Note that each section $\Sxeps$ is an interval due to the convexity of $g_\varepsilon$, hence the diameter is equal to the Lebesgue measure $|\Sxeps|$. Our result shows that the diameter is (exactly) of order~$\varepsilon^{\frac{1}{3}}$, uniformly in~$x$.

\begin{theorem}[Sparsity]\label{th:rate}
    There exist $C,\varepsilon_0>0$ such that 
    $$ {C}^{-1}{\varepsilon^{\frac{1}{3}}}\leq |\Sxeps| \leq C\varepsilon^{\frac{1}{3}} $$
    for all $x\in \Omega_0$ and $\varepsilon<
    \varepsilon_0$. 
\end{theorem}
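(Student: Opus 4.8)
The plan is to derive the claim from \cref{th:Reg} via a short dimensional argument. For $x\in\Omega_0$ write $h_x(y):=xy-f_\varepsilon(x)-g_\varepsilon(y)$, a concave $\mathcal C^1$ function on $\Omega_1$ with $h_x''=-g_\varepsilon''$, so that the density of $\pi_\varepsilon$ on the $x$-fibre is $\rho_x=\varepsilon^{-1}(h_x)_+$ and $\Sxeps=\{h_x\ge0\}$. Let $M_x:=\max_{\Omega_1}h_x=\max_{\Sxeps}h_x$, so $\varepsilon^{-1}M_x$ is the maximal density; note $M_x>0$ for $x\in\Omega_0$ since the marginal constraint forces $\rho_x\not\equiv0$. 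The goal is the two comparisons
\[
\tfrac{\lambda}{2}\,|\Sxeps|\,M_x\;\le\;\varepsilon\;\le\;\Lambda\,|\Sxeps|\,M_x
\qquad\text{and}\qquad
c\,|\Sxeps|^2\;\le\;M_x\;\le\;C\,|\Sxeps|^2,
\]
uniformly in $x$ and in small $\varepsilon$; multiplying them gives $|\Sxeps|^3\asymp\varepsilon$, which is the theorem.

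The first comparison (the ``easy geometric quantity'' of the introduction, not using \cref{th:Reg}) comes from the marginal constraint $\int\rho_x\,d\mu_1=1$, i.e.\ $\int_{\Sxeps}h_x(y)u_1(y)\,dy=\varepsilon$, together with the elementary fact that a concave nonnegative function on an interval of length $|\Sxeps|$ with maximum $M_x$ lies below the constant $M_x$ and above the ``tent'' joining the endpoint values to $(\bar y,M_x)$, where $\bar y:=\argmax h_x$; hence $\tfrac12|\Sxeps|M_x\le\int_{\Sxeps}h_x\,dy\le|\Sxeps|M_x$, and $\lambda\le u_1\le\Lambda$ finishes it.

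For the second comparison I would invoke \cref{th:Reg}, applied to $g_\varepsilon$ (legitimate as \cref{as:main} is symmetric): $C^{-1}\le g_\varepsilon''\le C$ a.e., i.e.\ $-C\le h_x''\le -C^{-1}$. For $M_x\gtrsim|\Sxeps|^2$: Taylor expansion of $h_x$ at $\bar y$, using $h_x'(\bar y)(y-\bar y)\le0$ on $\Sxeps$ (which holds whether $\bar y$ is an interior point, where $h_x'(\bar y)=0$, or an endpoint of $\Sxeps$), gives $0\le h_x(y)\le M_x-\tfrac{1}{2C}(y-\bar y)^2$, so $|\Sxeps|\le2\sqrt{2CM_x}$. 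Combined with the first comparison this \emph{already} yields the upper bound $|\Sxeps|\le C'\varepsilon^{1/3}$, and the identical argument on $y$-sections gives the symmetric bound $|\Syeps{y}|\le C'\varepsilon^{1/3}$ for all $y\in\Omega_1$, which I will reuse. For $M_x\lesssim|\Sxeps|^2$: by the upper bound just obtained, for small $\varepsilon$ every $\Sxeps=[\alpha,\beta]$ is a proper subinterval of $\Omega_1$, so (say) $\beta$ is interior to $\Omega_1$ and $h_x(\beta)=0$; since $-h_x'=g_\varepsilon'-x$ is nonnegative and nondecreasing on $[\bar y,\beta]$,
\[
M_x=h_x(\bar y)-h_x(\beta)=\int_{\bar y}^{\beta}(-h_x'(y))\,dy\;\le\;|\Sxeps|\,\big(g_\varepsilon'(\beta)-x\big),
\]
and $g_\varepsilon'(\beta)-x=\big(g_\varepsilon'(\beta)-g_\varepsilon'(\bar y)\big)+\big(g_\varepsilon'(\bar y)-x\big)\le C|\Sxeps|+\big(g_\varepsilon'(\bar y)-x\big)$ by $g_\varepsilon''\le C$. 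If $\bar y$ is interior the last term vanishes by first-order optimality, giving $M_x\le C|\Sxeps|^2$; if $\bar y\in\partial\Omega_1$, then $x$ lies in the section $\Syeps{\bar y}$ (because $h_x(\bar y)=M_x\ge0$) while $g_\varepsilon'(\bar y)$ is the $u_0$-barycenter of $\Syeps{\bar y}$ (\cref{ContinuityofT}, symmetric version), so $|g_\varepsilon'(\bar y)-x|\le|\Syeps{\bar y}|\le C'\varepsilon^{1/3}$ and $M_x\le C|\Sxeps|(|\Sxeps|+\varepsilon^{1/3})$. In either case, inserting into $\varepsilon\le\Lambda|\Sxeps|M_x$ and rearranging yields $|\Sxeps|\ge c\,\varepsilon^{1/3}$ for small $\varepsilon$, completing the proof.

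The main obstacle is the lower half $M_x\lesssim|\Sxeps|^2$ in the case where $\Sxeps$ abuts $\partial\Omega_1$ and $h_x$ is maximal there: the local Taylor estimate breaks down, and one must control the slope of $h_x$ at the boundary of the section, which is exactly where the already-established symmetric upper bound on section diameters re-enters. Conceptually, though, all the difficulty has been absorbed into \cref{th:Reg}: once the potentials are uniformly strongly convex, the exponent $\tfrac13$ is forced by the dimensional bookkeeping $|\Sxeps|\cdot M_x\asymp\varepsilon$ and $M_x\asymp|\Sxeps|^2$.
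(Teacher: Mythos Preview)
Your argument is correct, and it takes a route that differs from the paper's in one instructive respect. Both proofs rest on the same ``easy geometric quantity'' $|\Sxeps|\cdot M_x\asymp\varepsilon$ (this is exactly \cref{Lemma:maximunPrincp}), and both reduce \cref{th:rate} to the uniform strong convexity in \cref{th:Reg}. The difference is in how the second comparison $M_x\asymp|\Sxeps|^2$ is obtained. The paper does a Taylor expansion in the \emph{$x'$}-variable, using the bounds on $f''_\varepsilon$, to control $t=\max_{x'}\xi(x',f'_\varepsilon(x))$; it then needs \cref{Lemma:comparemax} to transfer this back to $M_x=\max_y\xi(x,y)$, which finally feeds into \cref{Lemma:maximunPrincp} to give the intermediate estimate~\eqref{eq:boundSx}. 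You instead Taylor-expand in the \emph{$y$}-variable, using the bounds on $g''_\varepsilon$ directly, so that $M_x\asymp|\Sxeps|^2$ falls out without the detour through \cref{Lemma:comparemax}. The price you pay is the boundary case $\bar y\in\partial\Omega_1$, where the first-order condition fails and you must bound the residual slope $g'_\varepsilon(\bar y)-x$; your mini-bootstrap via the already-established symmetric upper bound on $|\Syeps{\bar y}|$ handles this cleanly. In short: your route is a bit more elementary once \cref{th:Reg} is in hand, while the paper's route via~\eqref{eq:boundSx} is more natural given that in the paper \cref{th:Reg} and \cref{th:rate} are developed simultaneously and~\eqref{eq:boundSx} is already available as a byproduct.
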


While \cref{th:rate} gives the sharp rate at which the  support of $\pi_{\eps}$ shrinks, this alone does not imply that the support is close to the unregularized optimal transport. The next result provides the sharp convergence rate for the Hausdorff distance 
$$ d\left(\Sxeps , T_0(x)\right)=\sup_{y\in \Sxeps}|y -T_0(x)| $$
between $\Sxeps$ and the image $\{T_0(x)\}$ of Monge's map; note that the latter is the $x$-section of the support of $\pi_{0}$. The convergence is  measured in $L^2(\Omega_0)$-norm 
$\|h\|_{ L^2(\Omega_0)} := \int_{\Omega_0} h(x)^2 dx $. As a consequence, we also obtain the rate  of convergence of $f'_\varepsilon $ towards $T_0$ in terms of the same norm.

\begin{theorem}[Convergence]\label{th:ratesDer}
    There exist $C,\varepsilon_0>0$ such that  for every $\varepsilon<\varepsilon_0$,
    \begin{enumerate}
        \item  the Hausdorff distance between the optimal supports satisfies
    $$ {C}^{-1}{\varepsilon^{\frac{1}{3}}}\leq \left\|d\left(\Sxeps , T_0(x)\right)\right\|_{L^{2}(\Omega_{0},dx)} \leq C\varepsilon^{\frac{1}{3}}, $$
    \item the derivatives $f'_\varepsilon$  converge to Monge's map $T_0=f'_0$ according to
    \begin{equation}
        \label{eq:rateDerivative}
        \|f'_\varepsilon-T_0\|_{L^2(\Omega_0,dx)} \leq C\varepsilon^{\frac{1}{3}}.
    \end{equation}
    \end{enumerate}
\end{theorem}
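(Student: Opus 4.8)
The plan is to derive both parts from the uniform sparsity estimate \cref{th:rate} together with the first-order description of $f_{\eps}$ in \cref{th:Reg}(i) and \cref{ContinuityofT}. The key preliminary observation is that $T_{\eps}:=f_{\eps}'$ behaves like a transport map: since $f_{\eps}$ is convex, $T_{\eps}$ is non-decreasing, and the formula for $f_{\eps}'$ exhibits $T_{\eps}(x)$ as a $\mu_{1}$-weighted average of the interval $\Sxeps\subseteq\Omega_{1}$, so $T_{\eps}(x)\in\Sxeps\subseteq\Omega_{1}$ for every $x$; in particular $|T_{\eps}(x)-y|\le\diam(\Sxeps)=|\Sxeps|$ whenever $y\in\Sxeps$.

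For part~(ii) I would argue as follows. Let $\nu_{\eps}:=(T_{\eps})_{\#}\mu_{0}\in\cP(\Omega_{1})$. Because $T_{\eps}$ and $T_{0}$ are both non-decreasing, the coupling $(T_{\eps},T_{0})_{\#}\mu_{0}$ of $(\nu_{\eps},\mu_{1})$ has comonotone support, hence is optimal for the quadratic Wasserstein distance $W_{2}$ in dimension one; therefore $\|f_{\eps}'-T_{0}\|_{L^{2}(\mu_{0})}^{2}=\int|T_{\eps}-T_{0}|^{2}\,d\mu_{0}=W_{2}(\nu_{\eps},\mu_{1})^{2}$. To bound $W_{2}(\nu_{\eps},\mu_{1})$ I would use the \emph{sub-optimal} coupling obtained by pushing $\pi_{\eps}$ forward under $(x,y)\mapsto(T_{\eps}(x),y)$, which has marginals $\nu_{\eps}$ and $\mu_{1}$; disintegrating $\pi_{\eps}$ over $\mu_{0}$ and using $T_{\eps}(x)\in\Sxeps$ gives $W_{2}(\nu_{\eps},\mu_{1})^{2}\le\int|T_{\eps}(x)-y|^{2}\,d\pi_{\eps}(x,y)\le\int_{\Omega_{0}}|\Sxeps|^{2}\,d\mu_{0}(x)\le C^{2}\eps^{2/3}$ by \cref{th:rate}. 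This yields $\|f_{\eps}'-T_{0}\|_{L^{2}(\mu_{0})}\le C\eps^{1/3}$, and since $u_{0}\ge\lambda$ one passes to $L^{2}(\Omega_{0},dx)$ at the cost of the factor $\lambda^{-1/2}$, proving~\eqref{eq:rateDerivative}.

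For part~(i), the upper bound follows by combining (ii) with \cref{th:rate}: for $y\in\Sxeps$ we have $|y-T_{0}(x)|\le|y-T_{\eps}(x)|+|T_{\eps}(x)-T_{0}(x)|\le|\Sxeps|+|f_{\eps}'(x)-T_{0}(x)|$ using $T_{\eps}(x)\in\Sxeps$, so taking the supremum over $y$, then the $L^{2}(\Omega_{0},dx)$-norm, and applying the triangle inequality gives the bound $C\eps^{1/3}$. The lower bound is elementary: for any interval $I$ and any point $p$ one has $\sup_{y\in I}|y-p|\ge\tfrac12|I|$, hence $d(\Sxeps,T_{0}(x))\ge\tfrac12|\Sxeps|\ge\tfrac1{2C}\eps^{1/3}$ for every $x$ by \cref{th:rate}, and integrating the square over $\Omega_{0}$ finishes the proof.

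The only step I expect to require care is the chain of inequalities in part~(ii). It is tempting to estimate $W_{1}(\nu_{\eps},\mu_{1})\lesssim\eps^{1/3}$ (immediate from $|\Sxeps|\lesssim\eps^{1/3}$) and then use $W_{2}^{2}\le\diam(\Omega_{1})\,W_{1}$, but this only gives $W_{2}\lesssim\eps^{1/6}$, off by a factor of two in the exponent. Feeding the \emph{squared} diameter $|\Sxeps|^{2}$ directly into the transport cost is what produces the sharp rate $\eps^{1/3}$. Everything else—monotonicity of $T_{\eps}$ with $T_{\eps}(x)\in\Sxeps$, optimality of comonotone couplings in one dimension, and the interval estimate—is standard or already in place.
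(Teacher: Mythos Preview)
Your argument is correct and takes a genuinely different route from the paper's. The paper first proves the rate for the \emph{barycentric projection} $S_{\eps}(x)=\int y\,\pi_{\eps}(dy\mid x)$ (\cref{le:barycentricProjRate}) via an argument adapted from Carlier--Pegon--Tamanini: Jensen's inequality and strong convexity of the Kantorovich potential $g_{0}$, together with $g_{0}'(T_{0}(x))=x$, reduce the estimate to the cost gap $\QOT_{\eps}-\OT$, which is then bounded by $C\eps^{2/3}$ using the external result of Eckstein--Nutz. Part~(ii) and the upper bound in~(i) are deduced from that. You bypass both the strong convexity of $g_{0}$ and the external cost rate entirely: your proof of~(ii) uses only \cref{th:rate} and the one-dimensional fact that a coupling with monotone support is $W_{2}$-optimal, yielding $\|f_{\eps}'-T_{0}\|_{L^{2}(\mu_{0})}=W_{2}\big((f_{\eps}')_{\#}\mu_{0},\mu_{1}\big)$, which you then bound by the suboptimal coupling $(f_{\eps}'\times\id)_{\#}\pi_{\eps}$ and the squared section diameter. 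This is more self-contained and arguably cleaner in the scalar setting; the price is that the comonotone-coupling step is strictly one-dimensional, whereas the paper's route through $\QOT_{\eps}-\OT$ would carry over to higher dimensions once the analogue of \cref{th:rate} is in place.
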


The rate~\eqref{eq:rateDerivative} for the derivatives of the potentials implies a rate for the potentials themselves. As the potentials are only determined up to an additive constant, convergence can only hold if the constant is chosen in a suitable way. For instance, we can normalize the potentials symmetrically; i.e.,
\begin{align}\label{eq:normalizationPot}
   \int f_{\eps}(x) \mu_{0}(x) = \int g_{\eps}(y)\mu_{1}(y), \qquad \int f_{0}(x) \mu_{0}(x) = \int g_{0}(y)\mu_{1}(y).
\end{align} 
Another popular normalization is $\int g_{\eps}(y)\mu_{1}(y)= \int g_{0}(y)\mu_{1}(y)=0$;  the subsequent result also holds for that choice. We use the H\"older norm 
\[
\|h\|_{C^{0, \frac{1}{2}}(\Omega_0)} := \sup_{x \in \Omega_0} |h(x)|+  \sup_{x, y \in \Omega_0, x \neq y} \frac{|h(x) - h(y)|}{|x - y|^{\frac{1}{2}}}.
\]

\begin{corollary}\label{co:potentialRate}
    Let the potentials be normalized according to~\eqref{eq:normalizationPot}.
    There exist $C,\varepsilon_0>0$ such that  for every $\varepsilon<\varepsilon_0$,
    $$
       \|f_\varepsilon-f_0\|_{C^{0, \frac{1}{2}}(\Omega_0)} \leq C\varepsilon^{\frac{1}{3}}.
    $$
\end{corollary}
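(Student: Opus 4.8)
The plan is to deduce \cref{co:potentialRate} from the derivative rate \eqref{eq:rateDerivative} in \cref{th:ratesDer}(ii) together with the symmetric normalization \eqref{eq:normalizationPot}, treating separately the H\"older seminorm and the sup-norm of $f_\eps - f_0$. For the seminorm, note that both $f_\eps$ and $f_0$ are convex on $\Omega_0$ with derivatives bounded above (by \cref{th:Reg}(iii) and its consequence for $T_0$), hence $f_\eps - f_0$ is itself Lipschitz with a constant $L$ independent of $\eps$. For a bounded Lipschitz function $h$ on an interval $\Omega_0$ of length $\ell$, one has the elementary interpolation bound $|h(x)-h(y)| \le (2L)^{1/2} |h|_{\infty}^{1/2} |x-y|^{1/2}$ whenever $|x-y| \le |h|_\infty/L$, and trivially $|h(x)-h(y)| \le 2|h|_\infty \le 2 \ell^{-1/2}|h|_\infty^{1/2}\cdot\ell^{1/2}\cdot(\ell L/|h|_\infty)^{1/2}$... more cleanly: the $C^{0,1/2}$ seminorm of a Lipschitz function $h$ on a bounded interval is controlled by $C(L,\ell)\,\|h\|_\infty^{1/2}$. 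So once we bound $\|f_\eps-f_0\|_\infty$ by $C\eps^{1/3}$, the seminorm is bounded by $C\eps^{1/6}$, which is far worse than $\eps^{1/3}$ and would not suffice. Therefore the seminorm must be handled directly, not through the sup-norm.

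A cleaner route for the seminorm: for $x<y$ in $\Omega_0$, write $(f_\eps-f_0)(y)-(f_\eps-f_0)(x) = \int_x^y (f'_\eps - T_0)(t)\,dt$, so by Cauchy--Schwarz $|(f_\eps-f_0)(y)-(f_\eps-f_0)(x)| \le |y-x|^{1/2} \|f'_\eps-T_0\|_{L^2(\Omega_0,dx)} \le C\eps^{1/3}|y-x|^{1/2}$, using \eqref{eq:rateDerivative}. This immediately gives $|f_\eps-f_0|_{C^{0,1/2}(\Omega_0)} \le C\eps^{1/3}$, where $|\cdot|_{C^{0,1/2}}$ denotes the seminorm. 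For the sup-norm, first integrate the previous pointwise identity in $x$ against $\mu_0$: since $\int (f_\eps - f_0)\,d\mu_0 = \int(g_\eps-g_0)\,d\mu_1$ by \eqref{eq:normalizationPot}, and since the derivative rate \eqref{eq:rateDerivative} holds equally for $g_\eps$ and the $y$-sections by the symmetry of \cref{as:main}, the same Cauchy--Schwarz argument applied to $g_\eps-g_0$ shows that $g_\eps-g_0$ has $L^2(\Omega_1,dy)$-oscillation of order $\eps^{1/3}$; combined with the identity $\int(f_\eps-f_0)\,d\mu_0=\int(g_\eps-g_0)\,d\mu_1$, one sees that $\big|\int(f_\eps-f_0)\,d\mu_0\big|$ and hence the average value of $f_\eps-f_0$ is of order $\eps^{1/3}$. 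Then for any fixed $x_0\in\Omega_0$, $|(f_\eps-f_0)(x_0)| \le \big|\textstyle\int (f_\eps-f_0)\,d\mu_0\big| + \int |(f_\eps-f_0)(x_0)-(f_\eps-f_0)(x)|\,d\mu_0(x)$, and the second term is $\le C\eps^{1/3}\int|x_0-x|^{1/2}\,d\mu_0(x)\le C'\eps^{1/3}$ by the seminorm bound just obtained. This yields $\|f_\eps-f_0\|_\infty \le C\eps^{1/3}$, completing the proof.

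The one point requiring a little care is the claim that the $L^2$-oscillation of $g_\eps-g_0$ is of order $\eps^{1/3}$ and that this, via $\int(f_\eps-f_0)\,d\mu_0 = \int(g_\eps-g_0)\,d\mu_1$, pins down the average of $f_\eps-f_0$. Concretely: by the symmetric version of \eqref{eq:rateDerivative} for $g$, $\|g'_\eps - T_0^{-1}\|_{L^2(\Omega_1,dy)}\le C\eps^{1/3}$ (here $g'_0 = T_0^{-1}$ is the inverse Monge map), so as above $|g_\eps(y)-g_0(y) - (g_\eps(y')-g_0(y'))| \le C\eps^{1/3}|y-y'|^{1/2}$ for $y,y'\in\Omega_1$; integrating over $y'$ against $\mu_1$ gives $|g_\eps(y)-g_0(y) - m_g| \le C\eps^{1/3}$ where $m_g := \int(g_\eps-g_0)\,d\mu_1$. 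Similarly $|f_\eps(x)-f_0(x)-m_f|\le C\eps^{1/3}$ with $m_f := \int(f_\eps-f_0)\,d\mu_0$. But \eqref{eq:normalizationPot} says $m_f = m_g$. Evaluating the two pointwise estimates at a single matched pair $(x_0, T_0(x_0))$ and using the known convergence $f_\eps\to f_0$, $g_\eps\to g_0$ in sup-norm (stated in \cite{Nutz.24}, used qualitatively; even just $m_f\to 0$ suffices) would close a bootstrap, but in fact no bootstrap is needed: once we have $|f_\eps - f_0 - m_f|\le C\eps^{1/3}$ uniformly and the qualitative fact $\|f_\eps-f_0\|_\infty\to 0$ from \cite{Nutz.24}, we get $|m_f|\to 0$, and then refining — write $m_f = \int(f_\eps-f_0)\,d\mu_0$ and use the oscillation bound to see $f_\eps-f_0$ is within $C\eps^{1/3}$ of its mean — actually the cleanest is: the qualitative bound already gives $|m_f| = |\int(f_\eps-f_0)d\mu_0|\to 0$; but we want the rate. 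Here is the fix that avoids circularity: the symmetric normalization forces $m_f = m_g$, and we have the two oscillation bounds; subtract to get that $f_\eps(x)-f_0(x)$ and $g_\eps(y)-g_0(y)$ agree up to $C\eps^{1/3}$ with the \emph{same} constant $m_f=m_g$. Now pick $x^\ast$ with $T_0(x^\ast) = y^\ast$; the equality of potentials on the support, $f_\eps(x^\ast) + g_\eps(y^\ast) = x^\ast y^\ast$ on $\spt\pi_\eps$ and $f_0(x^\ast)+g_0(y^\ast)=x^\ast y^\ast$, gives $(f_\eps-f_0)(x^\ast) + (g_\eps-g_0)(y^\ast) = 0$ provided $(x^\ast,y^\ast)\in\spt\pi_\eps\cap\spt\pi_0$ — and by \cref{th:rate}, $\spt\pi_\eps$ is within $C\eps^{1/3}$ of $\graph T_0$, so choosing $x^\ast$ appropriately we can arrange $(f_\eps-f_0)(x^\ast)+(g_\eps-g_0)(y^\ast) = O(\eps^{1/3})$ (the mismatch between $\spt\pi_\eps$ and the graph costs only $O(\eps^{1/3})$ after multiplying by the bounded quantities $x^\ast, y^\ast$ and using Lipschitzness of $f_0,g_0$). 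Combined with the two oscillation bounds this gives $2 m_f = O(\eps^{1/3})$, hence $|m_f|\le C\eps^{1/3}$, and finally $\|f_\eps-f_0\|_\infty \le |m_f| + C\eps^{1/3} \le C\eps^{1/3}$.

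The main obstacle is exactly this last pinning-down of the additive constant: the derivative rate controls $f_\eps - f_0$ only up to a constant, and one must extract the rate for that constant from the normalization \eqref{eq:normalizationPot} together with the support estimate \cref{th:rate} and the complementary-slackness identities defining the potentials. I expect this to be short but to require careful bookkeeping of which pairs $(x,y)$ lie in which supports. Everything else — the Cauchy--Schwarz passage from the $L^2$ derivative bound to the $C^{0,1/2}$ seminorm bound, and the passage from the seminorm plus controlled mean to the sup-norm — is routine.
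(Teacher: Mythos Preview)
Your approach is correct and genuinely different from the paper's. For the seminorm, your direct Cauchy--Schwarz bound
\[
  |(f_\eps-f_0)(y)-(f_\eps-f_0)(x)| = \Big|\int_x^y (f'_\eps-T_0)\Big| \le |y-x|^{1/2}\|f'_\eps-T_0\|_{L^2(\Omega_0,dx)}
\]
is the elementary one-dimensional version of what the paper does via Poincar\'e's inequality followed by Morrey's embedding $W^{1,2}\hookrightarrow C^{0,1/2}$; yours is more transparent in this scalar setting. For the additive constant, the paper takes an entirely different route: it uses strong duality to express $\QOT_\eps-\OT$ in terms of $\int(f_0-f_\eps)\,d\mu_0+\int(g_0-g_\eps)\,d\mu_1$ and invokes the known cost rate $\QOT_\eps-\OT\le C\eps^{2/3}$ from \cite{EcksteinNutz.22}, obtaining the sharper $|m_f|\le C\eps^{2/3}$. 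Your geometric argument via the potential identities and \cref{th:rate} is more self-contained (no external cost rate) but yields only $|m_f|\le C\eps^{1/3}$, which is still enough for the corollary.

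One point to tighten: the sentence ``$f_\eps(x^\ast)+g_\eps(y^\ast)=x^\ast y^\ast$ on $\spt\pi_\eps$'' is false---that identity holds only on the \emph{boundary} of $\spt\pi_\eps$; in the interior one has the strict inequality $\xi(x,y)>0$. The cleanest repair is to avoid choosing a single pair and instead integrate: with $y=T_0(x)$ one has $(f_\eps-f_0)(x)+(g_\eps-g_0)(T_0(x))=-\xi(x,T_0(x))$, and
\[
  |\xi(x,T_0(x))|\le |\Omega_0|\cdot|T_0(x)-y_M(x)|\le C\big(|T_0(x)-f'_\eps(x)|+|\Sxeps|\big)
\]
by concavity of $\xi(x,\cdot)$ and $f'_\eps(x)\in\Sxeps$. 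Integrating against $\mu_0$ and using $(T_0)_\#\mu_0=\mu_1$, \eqref{eq:rateDerivative}, and \cref{th:rate} gives $|2m_f|=|m_f+m_g|\le C\eps^{1/3}$ directly, with no need to locate a special $x^\ast$. Alternatively, you can bound $0\le\xi(x,T_0(x))\le\max_y\xi(x,y)\le C\eps^{2/3}$ via \cref{Lemma:maximunPrincp} once you know $T_0(x)\in\Sxeps$, which your own seminorm bound (combined with the lower diameter bound in \cref{th:rate}) guarantees at almost every $x$.
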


\begin{remark}\label{rk:sharpness}
  We conjecture that the rate~\eqref{eq:rateDerivative} is sharp, whereas the rate for $\|f_\varepsilon-f_0\|$ in \cref{co:potentialRate} is not. The sharp rate for $\|f_\varepsilon-f_0\|$ is conjectured to be $\varepsilon^{\frac{2}{3}}$.
\end{remark} 

\begin{remark}
In the preceding results, the asymptotic regime is asserted for regularization parameters $\eps<\eps_0$. Alternately, we may assert the asymptotic regime when the size of the support is below a threshold. Specifically, let $ \omega_i$ denote the modulus of continuity of $\log(u_i)$; then the purpose of $\eps_0$ is to ensure that for $\eps\leq \eps_0$, 
$$  \sup_{x\in \Omega_0}|\Sxeps|\leq \frac{1}{3}|\Omega_1|, \qquad \sup_{x\in \Omega_0} \omega_1(|\Sxeps|)\leq  \log(3/2),$$ 
and symmetrically with reversed roles of $x$ and $y$. The first condition is used in \cref{se:SecondOrderDer,subse:Bounds2ndDerivative}, the second in \cref{lemma:distanceBound}.
\end{remark}

\subsection{Outline of the proofs}

\cref{th:Reg,th:rate} are proved simultaneously though several steps. First, in \cref{sec:convex}, we prove that $f_\varepsilon$ is differentiable with a monotone increasing derivative $f_\varepsilon'(x)=\frac{\int_{\Sxeps} y   d\mu_1( y )}{\mu_1 (\Sxeps )}$, which implies that $f_\varepsilon$ is convex. In \cref{se:SecondOrderDer}, we show that $f_\varepsilon$ is twice differentiable (except possibly at two points) and provide an explicit formula for $f_\varepsilon''(x)$. 
In \cref{se:Bounds2ndDerivative}, this formula is used to bound $f_\varepsilon''(x)$. First, in \cref{Sec:Comparison}, we provide a relationship between the diameter $|\Sxeps|$ of the support and the maximum (over $y$) of the density of $\pi_\varepsilon$, whereas \cref{subse:dist} shows that $f'_\varepsilon(x)$ is approximately at the center of the support $\Sxeps$. In \cref{subse:Bounds2ndDerivative}, we give an initial bound on $f_\varepsilon''(x)$ in terms of the ratio between the diameters of different supports. Roughly, the bound is of the form 
  $$ C^{-1}\inf_{y\in \partial \Sxeps }\frac{ |\Sxeps|}{|\Syeps{y}|}\leq  f_\varepsilon'' ( x )\leq  C  \sup_{y\in \partial \Sxeps}\frac{ |\Sxeps|}{|\Syeps{y}|},$$
where $\Syeps{y}$ is the $y$-section of the support (the symmetric analogue of $\Sxeps$ with reversed roles for $x,y$). We proceed by relating and simultaneously bounding the four objects $\Sxeps$, $f''_{\eps}$ and $\Syeps{y}, g''_{\eps}$. A key intermediate result is a bound for $|\Sxeps|$ in terms of $\sigma_M(f_\varepsilon)= \esssup_{\Omega_0} f_\varepsilon'' $ and $\sigma_m(f_\varepsilon)= \essinf_{\Omega_0} f_\varepsilon'' $, namely 
   \begin{equation}
       \label{RelationSupportSigmaIntro}
       \left(\frac{\varepsilon}{\sigma_M(f_\varepsilon)}\right)^{\frac{1}{3}} \leq  |\Sxeps| \leq \left(\frac{\varepsilon}{\sigma_m(f_\varepsilon)}\right)^{\frac{1}{3}},
   \end{equation}
   as well as an analogous bound for $|\Syeps{y}|$ in terms of $\sigma_M(g_\varepsilon)= \esssup_{\Omega_1} g_\varepsilon'' $ and $\sigma_m(g_\varepsilon)= \essinf_{\Omega_1} g_\varepsilon''$. Using these bounds, we are able to relate the derivatives of $f_\varepsilon$ and $g_\varepsilon$ by
\begin{alignat*}{2} 
\sigma_M(f_\varepsilon) &\leq  C \left(\frac{\sigma_M(g_\varepsilon)}{\sigma_m(f_\varepsilon)}\right)^{\frac{1}{3}},  \qquad \sigma_m(f_\varepsilon) &&\geq  \frac{1}{C} \left(\frac{\sigma_m(g_\varepsilon)}{\sigma_M(f_\varepsilon)}\right)^{\frac{1}{3}}, \\
\sigma_M(g_\varepsilon)&\leq  C \left(\frac{\sigma_M(f_\varepsilon)}{\sigma_m(g_\varepsilon)}\right)^{\frac{1}{3}},  \qquad  \sigma_m(g_\varepsilon)&&\geq  \frac{1}{C} \left(\frac{\sigma_m(f_\varepsilon)}{\sigma_M(g_\varepsilon)}\right)^{\frac{1}{3}}  .
\end{alignat*}
After some algebra, this implies uniform bounds for all four quantities $\sigma_m(f_\varepsilon), \sigma_M(f_\varepsilon)$, $\sigma_m(g_\varepsilon)$, $\sigma_M(g_\varepsilon)$, establishing the results on $f_\varepsilon''(x)$ in \cref{th:Reg}. Now, \eqref{RelationSupportSigmaIntro} also yields the rate of $|\Sxeps|$ in \cref{th:rate}.

The proof of \cref{th:ratesDer} is obtained from the rate of $|\Sxeps|$, the rate for the regularized transport cost from \cite{EcksteinNutz.22} and an argument that we adapt from \cite{CarlierPegonTamanini.22}.

\section{Convexity and first order regularity}\label{sec:convex}

In this section we prove that the potential $f_\varepsilon$ is a $\mathcal{C}^1$, convex function. Recall the notation introduced in \cref{as:main} (which is in force for the remainder of the paper) and the continuous functions $f_\varepsilon$ and  $g_\varepsilon$  defined by \eqref{qdensity}, which satisfy (see \cite{Nutz.24}) 
 \begin{align}
        \label{eq:solution}
     &\int (  x  y -f_\varepsilon( x )-g_\varepsilon( y ))_+ d\mu_1( y )= \varepsilon \quad \text{for all $ x \in 
  \Omega_0$},\\
     \label{eq:solution2}
     &\int (  x  y -f_\varepsilon( x )-g_\varepsilon( y ))_+ d\mu_0( x )= \varepsilon \quad \text{for all $ y \in 
 \Omega_1$}.
\end{align}
These equations express the fact that the measure $\pi_{\eps}$ of~\eqref{qdensity} has marginals $\mu_{0}$ and $\mu_{1}$. Recall also the sections of the optimal support,
$$ \Sxeps =\{ y \in \Omega_1: \ xy -f_\varepsilon( x )-g_\varepsilon( y ) \geq 0 \},$$
and note that \eqref{eq:solution} implies $\mu_1(\Sxeps)>0$ for any $x \in \Omega_0$. 

\begin{lemma}\label{lemma:BoundOnderivatives}
  Let $ x \in \Omega_0$ and $t>0$ be such that $ x +t\in \Omega_0$. Then
     $$  T_\varepsilon( x ):= \frac{\int_{\Sxeps} y   d\mu_1( y )}{\mu_1 (\Sxeps)}  \quad\mbox{satisfies}\quad T_\varepsilon( x +t) \geq  \frac{f_\varepsilon( x +t)-f_\varepsilon( x )}{t} \geq  T_\varepsilon( x ).$$
    In particular, %
  $T_\varepsilon$ is monotone.
\end{lemma}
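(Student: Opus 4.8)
The plan is to exploit the single fact that $x\mapsto \int (xy-f_\varepsilon(x)-g_\varepsilon(y))_+\,d\mu_1(y)$ is \emph{constant}, equal to $\varepsilon$ by~\eqref{eq:solution}, together with the elementary subgradient inequality for the convex function $\phi(r):=r_+$. Namely, for any $s\in\R$ the value $p:=\chi_{\{s\ge 0\}}$ lies in the subdifferential $\partial\phi(s)$ --- this is obvious for $s\neq 0$, and at $s=0$ it is just the trivial bound $r_+\ge r$ --- so
\[
 (r)_+-(s)_+ \ \ge\ \chi_{\{s\ge 0\}}\,(r-s)\qquad\text{for all }r,s\in\R .
\]
The point of selecting the subgradient this way (rather than $\chi_{\{s>0\}}$) is that it matches the closed set $\Sxeps=\{y:\,xy-f_\varepsilon(x)-g_\varepsilon(y)\ge 0\}$ exactly, so no measure-zero discussion about the contact set $\{xy=f_\varepsilon(x)+g_\varepsilon(y)\}$ is needed. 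All integrals below are finite since $\mu_0,\mu_1$ are compactly supported (\cref{as:main}) and $f_\varepsilon,g_\varepsilon$ are Lipschitz.

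For the lower bound $\tfrac{f_\varepsilon(x+t)-f_\varepsilon(x)}{t}\ge T_\varepsilon(x)$ I would apply the displayed inequality with $r=(x+t)y-f_\varepsilon(x+t)-g_\varepsilon(y)$ and $s=xy-f_\varepsilon(x)-g_\varepsilon(y)$, so that $\chi_{\{s\ge 0\}}=\chi_{\Sxeps}$ and $r-s=ty-\big(f_\varepsilon(x+t)-f_\varepsilon(x)\big)$. Integrating against $\mu_1$ and using~\eqref{eq:solution} at both $x$ and $x+t$, the left-hand side vanishes, leaving
\[
0\ \ge\ t\!\int_{\Sxeps}\! y\,d\mu_1(y)\ -\ \big(f_\varepsilon(x+t)-f_\varepsilon(x)\big)\,\mu_1(\Sxeps).
\]
Since $\mu_1(\Sxeps)>0$ (a consequence of~\eqref{eq:solution}, as noted above), dividing by $t\,\mu_1(\Sxeps)$ gives exactly $\tfrac{f_\varepsilon(x+t)-f_\varepsilon(x)}{t}\ge T_\varepsilon(x)$.

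The upper bound $T_\varepsilon(x+t)\ge \tfrac{f_\varepsilon(x+t)-f_\varepsilon(x)}{t}$ is obtained by the symmetric choice, swapping the roles of the two points: take $r=xy-f_\varepsilon(x)-g_\varepsilon(y)$ and $s=(x+t)y-f_\varepsilon(x+t)-g_\varepsilon(y)$, so that $\chi_{\{s\ge 0\}}=\chi_{\Sxteps}$; integrate, again invoke~\eqref{eq:solution} to kill the left-hand side, and divide by $t\,\mu_1(\Sxteps)>0$. Chaining the two inequalities yields $T_\varepsilon(x+t)\ge T_\varepsilon(x)$, i.e.\ monotonicity of $T_\varepsilon$ (and, as a by-product, nondecreasing difference quotients, hence convexity of $f_\varepsilon$). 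The only genuinely delicate point is the bookkeeping in the subgradient step --- in particular committing to the selection $\chi_{\{s\ge 0\}}$ so that $\int \chi_{\{s\ge 0\}}(r-s)\,d\mu_1$ reproduces the numerator of $T_\varepsilon$ with no need to know that the contact set has $\mu_1$-measure zero; everything else is routine.
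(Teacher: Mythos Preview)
Your proof is correct and essentially identical to the paper's own argument: the paper uses the equivalent inequality $(a)_+-(b)_+\le \chi_{\{a\ge 0\}}(a-b)$ (which is your subgradient inequality with the labels swapped), applies it in both directions, integrates against $\mu_1$, and invokes~\eqref{eq:solution} to cancel the left-hand side. Your remark on choosing the subgradient selection $\chi_{\{s\ge 0\}}$ to match the closed set $\Sxeps$ is a nice clarification, but the substance is the same.
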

\begin{proof}
The inequality 
\begin{equation}
    \label{ineq:abPositive}
    (a)_+-(b)_+\leq \chi_{a\geq 0}(a-b) , \quad a,b\in \R,
\end{equation}
where $\chi_{a\geq 0}:=1$ if $a\geq 0$ and $\chi_{a\geq 0}:=0$ otherwise,
yields
\begin{equation}
    \label{ineq:1}
    (  x   y -f_\varepsilon( x )- g_\varepsilon( y ) )_+ \leq (  (x +t)  y -f_\varepsilon( x +t )- g_\varepsilon( y ) )_+  +\chi_{\Sxeps}\left(f_\varepsilon( x +t )- f_\varepsilon( x )-t    y \right)
\end{equation}
as well as 
\begin{multline}\label{ineq:2}
    ( x   y -f_\varepsilon( x )- g_\varepsilon( y ) )_+\geq  (   (x +t)y -f_\varepsilon( x +t)- g_\varepsilon( y ) )_+ \\ + \chi_{\Sxteps}\left(f_\varepsilon( x +t)- f_\varepsilon( x )-ty \right).
\end{multline}
Integrating \eqref{ineq:1} with respect to $\mu_1$ and using~\eqref{eq:solution} yields 
     $ f_\varepsilon( x +t)-f_\varepsilon( x ) \geq  tT_\varepsilon( x ).$
Similarly, integrating \eqref{ineq:2} yields the second inequality in the claim.
\end{proof}

Noting that $T_\varepsilon(x)\in\Omega_{1}$ is uniformly bounded, \cref{lemma:BoundOnderivatives} implies in particular that $f_{\varepsilon}$ is Lipschitz. Next, we prove that in fact $f_\varepsilon\in \mathcal{C}^1(\Omega_0)$. We write $\Int A$ for the interior of a set~$A$.

\begin{proposition}\label{ContinuityofT}
  For all $\eps>0$, the function $f_\varepsilon$ is convex on $\Omega_0$. Moreover, $f_\varepsilon\in \mathcal{C}^1(\Omega_0)$ with derivative 
\begin{align}\label{eq:fprime}
f_\varepsilon'(x)=\frac{\int_{\Sxeps} y   d\mu_1( y )}{\mu_1 (\Sxeps )}\in \Int\Sxeps \qforallq x\in\Omega_{0}.
\end{align} 
\end{proposition}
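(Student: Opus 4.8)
The plan is to upgrade the one-sided difference-quotient estimate of \cref{lemma:BoundOnderivatives} to genuine differentiability by showing that $T_\varepsilon$ is continuous, and then to identify $f_\varepsilon'=T_\varepsilon$. Convexity is immediate from \cref{lemma:BoundOnderivatives}: the inequalities there say precisely that for $x<x'$ in $\Omega_0$ the slope $\frac{f_\varepsilon(x')-f_\varepsilon(x)}{x'-x}$ is sandwiched between $T_\varepsilon(x)$ and $T_\varepsilon(x')$, and since $T_\varepsilon$ is monotone increasing, the slopes are monotone in their endpoints, which is equivalent to convexity of $f_\varepsilon$ on the interval $\Omega_0$. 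In particular $f_\varepsilon$ is locally Lipschitz and differentiable off a countable set, with left and right derivatives $f_{\varepsilon,\pm}'(x)$ everywhere, and by sending $t\downarrow 0$ in \cref{lemma:BoundOnderivatives} we get $f_{\varepsilon,-}'(x)\le T_\varepsilon(x)\le f_{\varepsilon,+}'(x)$ at every interior point.

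Next I would prove that $x\mapsto \Sxeps$ varies continuously, which forces $T_\varepsilon$ to be continuous. Write $S_{x,\varepsilon}=[\ell(x),r(x)]\cap\Omega_1$ (an interval by convexity of $g_\varepsilon$), where $\ell(x),r(x)$ are the endpoints of $\{y:\ xy-f_\varepsilon(x)-g_\varepsilon(y)\ge 0\}$. Since $f_\varepsilon,g_\varepsilon$ are continuous and $g_\varepsilon$ is convex, the function $h_x(y):=xy-f_\varepsilon(x)-g_\varepsilon(y)$ is continuous and concave in $y$ and jointly continuous in $(x,y)$; a standard argument then shows the superlevel set $\{h_x\ge 0\}$ depends upper semicontinuously on $x$, and the crucial point is that it cannot collapse or jump because $\mu_1(\Sxeps)>0$ uniformly is \emph{not} quite what we have — instead one uses that \eqref{eq:solution} gives $\int h_x(y)_+\, d\mu_1(y)=\varepsilon>0$, so $|\Sxeps|$ is bounded below (using $u_1\le\Lambda$) and the interval has positive length; combined with the continuity of the defining function this yields $\ell,r$ continuous. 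Hence $x\mapsto\mathbf 1_{\Sxeps}$ is continuous in $L^1(\mu_1)$ and $T_\varepsilon(x)=\frac{\int_{\Sxeps}y\,d\mu_1(y)}{\mu_1(\Sxeps)}$ is continuous on $\Omega_0$.

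With $T_\varepsilon$ continuous, the sandwich $f_{\varepsilon,-}'(x)\le T_\varepsilon(x)\le f_{\varepsilon,+}'(x)$ together with the convexity relation (for a convex function the left derivative is $\le$ the right derivative, and both are monotone) pins things down: at a point of differentiability $f_\varepsilon'(x)=T_\varepsilon(x)$, and since the set of such points is dense and both $f_{\varepsilon,\pm}'$ are monotone with the continuous function $T_\varepsilon$ squeezed between them, we get $f_{\varepsilon,-}'=f_{\varepsilon,+}'=T_\varepsilon$ everywhere, i.e.\ $f_\varepsilon\in\mathcal C^1(\Omega_0)$ with $f_\varepsilon'=T_\varepsilon$. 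Finally, $T_\varepsilon(x)\in\Int\Sxeps$: it is a weighted average (w.r.t.\ $\mu_1$, which has full support on $\Omega_1$ with density $\ge\lambda>0$) of points in the nondegenerate interval $\Sxeps$, and since the weight is strictly positive near both endpoints the barycenter lies strictly inside. The main obstacle I expect is the continuity of the support map $x\mapsto\Sxeps$ — specifically ruling out a discontinuous jump of an endpoint, which is where one must use \eqref{eq:solution} (the $\mu_1$-mass of the positive part is exactly $\varepsilon$) rather than just soft semicontinuity, to guarantee the interval neither degenerates nor jumps as $x$ varies.
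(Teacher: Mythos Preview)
Your proposal is correct and follows essentially the same route as the paper: deduce convexity from \cref{lemma:BoundOnderivatives}, show the support endpoints (hence $T_\varepsilon$) vary continuously in $x$, and conclude $f_\varepsilon\in\mathcal C^1$ with $f_\varepsilon'=T_\varepsilon\in\Int\Sxeps$. The only difference is cosmetic: you obtain convexity directly from the slope sandwich (which is slightly more elementary), whereas the paper passes through an auxiliary convex potential for the monotone map $T_\varepsilon$ via \cite{RockafellarWets.98,Rockafellar.97}; the endpoint-continuity argument and the use of the symmetric statement for $g_\varepsilon$ to make $\Sxeps$ an interval are the same in both.
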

\begin{proof}
As $T_\varepsilon$ is monotone by \cref{lemma:BoundOnderivatives}, there is a l.s.c.\ convex function $\phi:\Omega_0\to \R$ such that  
 $$T_\varepsilon( x )\in \partial \phi( x ):=\{ y \in \mathbb{R}: \phi(z) \geq \phi(x) +  y(z - x) \ \text{for all} \, z \in \mathbb{R} \}
$$ for all $ x \in \Omega_0$; cf.\ \cite[Theorem~12.17]{RockafellarWets.98}. Since $\partial \phi( x )$ is the singleton $\{\phi'( x )\}$ for a.e.\ $ x \in \Omega_0$, it follows that $T_\varepsilon( x )=\phi'( x )$  for a.e.\ $ x \in \Omega_0$. Call $\Omega_0'$ the set of all $ x \in \Omega_0$ where the latter holds. Then  $T_\varepsilon$ is continuous on $\Omega_0'$; cf.\ \cite[Theorem~25.5.]{Rockafellar.97}. On the other hand, since $f_\varepsilon$ is  Lipschitz, Rademacher's theorem yields a set $\Omega''_0\subset \Omega_0$ with  $|\Omega_0\setminus \Omega''_0|=0$ on which $f_\varepsilon$ is differentiable. In view of \cref{lemma:BoundOnderivatives} we conclude that 
    $ f'_\varepsilon= T_\varepsilon=\phi'$ on $\Omega'_0\cap \Omega''_0$. This implies that $f_\varepsilon=\phi+\alpha$ on $\Omega_0$, for some constant $\alpha$ (cf.\ \cite[Lemma~2.5]{delBarrioGonzalezLoubes.24}), and completes the proof that $f_{\eps}$ is convex. Symmetrically, $g_{\eps}$ is convex, which in particular implies that $\Sxeps$ is convex; i.e., an interval. (The convexity of the support $\Omega_{1}$ is crucial here.)

	Let $\{ x _n\} \subset \Omega_0$ satisfy $ x _n\to  x \in \Omega_0$. As $f_\varepsilon$ is continuous, the boundary points of the (nonempty) intervals $\Sxneps$ converge to the ones of $\Sxeps$. Since $\mu$ is atomless, this implies 
\begin{equation*}
    \mu_1(\Sxneps) \to {\mu_1(\Sxeps)} \qandq \int_{\Sxneps} y   d\mu_1( y ) \to \int_{\Sxeps} y   d\mu_1( y ).
\end{equation*}
As a consequence, $T_{\eps}$ is continuous, which by \cref{lemma:BoundOnderivatives} implies that $f_\varepsilon\in \mathcal{C}^1(\Omega_0)$ with $f'_{\eps}(x)=T_\varepsilon( x )=\frac{\int_{\Sxeps} y   d\mu_1( y )}{\mu_1 (\Sxeps )}$. This weighted average over $\Sxeps$ is an element of $\Sxeps$ by convexity, and it cannot lie on the boundary as $\mu_1$ is absolutely continuous.
\end{proof}

\section{Second order differentiability}\label{se:SecondOrderDer}

This section investigates the sign and the differentiability properties of the first derivative $f'_{\eps}$. 
For brevity, define 
$$ \xi(x,y):=-f_\varepsilon(x)-g_\varepsilon(y)+xy.$$
For each $x$, the function $y\mapsto \xi(x,y)$ is concave, so that the set 
$ \Sxeps=\{y\in\Omega_{1}: \ \xi(x,y)\geq0\}$ is a compact interval
    $$\Sxeps =[y_{m}(x), y_M(x)].$$ 
(The dependence on $\eps$ is suppressed in some of our notation.) For ease of reference, we state the qualitative result of~\cite[Theorem~4.1]{Nutz.24} as follows.

\begin{proposition}\label{pr:qualitative}
  The support of $\pi_{\eps}$ converges to the support of $\pi_{0}$ in Hausdorff distance $d$ as $\eps\to0$. Since the support of $\pi_{0}$ is the graph of the Lipschitz map $T_{0}$, it follows that 
$$\lim_{\eps\to0}\sup_{x\in\Omega_{0}} d(\Sxeps, T_{0}(x))=0 \qandq \lim_{\eps\to0}\sup_{x\in\Omega_{0}} |\Sxeps|=0.$$
\end{proposition}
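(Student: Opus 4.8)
The plan is to deduce the statement from the uniform convergence $f_\eps\to f_0$ and $g_\eps\to g_0$ of (suitably normalized) potentials, which in turn comes from an Arzel\`a--Ascoli compactness argument together with duality. Since $\Sxeps$ and the support $\cS_\eps$ depend on $(f_\eps,g_\eps)$ only through $xy-f_\eps(x)-g_\eps(y)$ (see \eqref{qdensity}), they are unchanged under a parallel shift, so I would normalize $f_\eps(a_0)=f_0(a_0)$. By \cref{lemma:BoundOnderivatives} and \cref{ContinuityofT} each $f_\eps$ is convex with $f_\eps'=T_\eps$ taking values in the bounded interval $\Omega_1$, hence the $f_\eps$ are equi-Lipschitz; anchored at $a_0$ they are then uniformly bounded, and via \eqref{eq:solution2} the same holds for $g_\eps$ on $\Omega_1$. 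Consequently every sequence $\eps_n\to0$ has a subsequence along which $f_{\eps_n}\to f$ and $g_{\eps_n}\to g$ uniformly, with $f,g$ convex. It then suffices to show that any such limit is $(f_0,g_0)$; this forces the whole family to converge.

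To identify the limit, I would first let $\eps\to0$ in \eqref{eq:solution}: uniform convergence of the integrand and $\eps\to0$ on the right give $(xy-f(x)-g(y))_+=0$ for $\mu_1$-a.e.\ $y$ and all $x$, and since $\mu_1$ has full support $\Omega_1$ while $xy-f(x)-g(y)$ is continuous in $y$, this upgrades to $f(x)+g(y)\ge xy$ on $\Omega_0\times\Omega_1$; i.e.\ $(f,g)$ is admissible for \eqref{dualSemi}. For optimality I would invoke the absence of a duality gap between \eqref{qot} and \eqref{dual} (\cite{Nutz.24}): inserting the reparametrization \eqref{ex:reparam} into the dual objective yields $\QOT_\eps=\tfrac12\int x^2\,d\mu_0+\tfrac12\int y^2\,d\mu_1-\int f_\eps\,d\mu_0-\int g_\eps\,d\mu_1-P_\eps$ with $P_\eps:=\tfrac1{2\eps}\int(xy-f_\eps(x)-g_\eps(y))_+^2\,d(\mu_0\otimes\mu_1)\ge0$. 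Along the subsequence $\int f_\eps\,d\mu_0+\int g_\eps\,d\mu_1$ converges and $\QOT_\eps\to\OT$ (\cite{LorenzMahler.22,EcksteinNutz.22}), so $P_\eps\to P_\infty\ge0$. The same reparametrization identifies $\OT=\tfrac12\int x^2\,d\mu_0+\tfrac12\int y^2\,d\mu_1-v_0$ with $v_0$ the value of \eqref{dualSemi}; comparing gives $v_0=\int f\,d\mu_0+\int g\,d\mu_1+P_\infty$, while admissibility of $(f,g)$ gives $\int f\,d\mu_0+\int g\,d\mu_1\ge v_0$. Hence $P_\infty=0$ and $(f,g)$ is optimal for \eqref{dualSemi}; by uniqueness of the Kantorovich potentials up to a parallel shift under \cref{as:main} (and the normalization) $(f,g)=(f_0,g_0)$. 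Thus $f_\eps\to f_0$ and $g_\eps\to g_0$ uniformly.

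For the support convergence, put $e_\eps:=\sup_{(x,y)\in\cS_\eps}\mathrm{dist}\big((x,y),\graph T_0\big)$. To see $e_\eps\to0$ I argue by contradiction: along some $\eps_n\to0$ choose $(x_n,y_n)\in\cS_{\eps_n}$ with $\mathrm{dist}((x_n,y_n),\graph T_0)\ge\delta>0$, pass to a convergent subsequence $(x_n,y_n)\to(x_*,y_*)$, and let $n\to\infty$ in $x_n y_n-f_{\eps_n}(x_n)-g_{\eps_n}(y_n)\ge0$ to obtain $x_*y_*\ge f_0(x_*)+g_0(y_*)$; with admissibility this is an equality, so $(x_*,y_*)$ lies in the contact set $\{(x,y):f_0(x)+g_0(y)=xy\}$. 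Under \cref{as:main}, $g_0$ is strictly convex (its derivative $T_0^{-1}=F_0^{-1}\circ F_1$ is strictly increasing), so this contact set is exactly $\graph T_0$; hence $y_*=T_0(x_*)$, contradicting $\mathrm{dist}((x_*,y_*),\graph T_0)\ge\delta$. For the reverse bound, $f_\eps'(x)=T_\eps(x)\in\Sxeps$ by \cref{ContinuityofT}; since the convex functions $f_\eps$ converge uniformly to $f_0$, which is $\mathcal C^1$ (because $T_0=F_1^{-1}\circ F_0$ is Lipschitz), their derivatives converge, $T_\eps(x)\to T_0(x)$ at every interior $x$, and as $T_\eps,T_0$ are monotone with $T_0$ continuous on the compact interval $\Omega_0$ this is uniform; therefore $\mathrm{dist}((x,T_0(x)),\cS_\eps)\le|T_0(x)-T_\eps(x)|\to0$ uniformly in $x$. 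Together the two bounds give $d(\cS_\eps,\graph T_0)\to0$.

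The displayed limits then follow immediately: for $y\in\Sxeps$ we have $(x,y)\in\cS_\eps$, so there is $x'$ with $|x-x'|\le e_\eps$ and $|y-T_0(x')|\le e_\eps$, whence $|y-T_0(x)|\le(1+\Lip T_0)\,e_\eps$; taking suprema over $y\in\Sxeps$ and $x\in\Omega_0$ gives $\sup_x d(\Sxeps,T_0(x))\to0$, and $\sup_x|\Sxeps|\le 2(1+\Lip T_0)\,e_\eps\to0$. The step I expect to be most delicate is the identification of the limit in the second paragraph: one could instead extract a weak limit $\pi_*$ of $\pi_\eps$ and attempt complementary slackness, but showing that $xy-f_\eps(x)-g_\eps(y)$ is uniformly small on $\spt\pi_\eps$ requires bounds on the density $\tfrac{d\pi_\eps}{d(\mu_0\otimes\mu_1)}$ that are precisely the hard content of the later sections, so routing through the cost convergence $\QOT_\eps\to\OT$ is the economical choice here.
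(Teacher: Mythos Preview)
The paper does not supply its own proof of \cref{pr:qualitative}; it is quoted from \cite[Theorem~4.1]{Nutz.24}, and the introduction summarizes that argument as Arzel\`a--Ascoli compactness of the potentials followed by identification of the limit as the Kantorovich pair. Your proposal carries out exactly this program and is correct. Your appeal to \cref{lemma:BoundOnderivatives} and \cref{ContinuityofT} is legitimate, since those are established in \cref{sec:convex} without using \cref{pr:qualitative}, so there is no circularity.

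One place to tighten: the sentence ``via \eqref{eq:solution2} the same holds for $g_\eps$'' is too quick. By symmetry $g_\eps$ is equi-Lipschitz, but your normalization fixes only $f_\eps(a_0)$, so you still need to anchor $g_\eps$. This is routine: with $f_\eps$ already uniformly bounded, if $\inf_{\Omega_1}g_{\eps_n}\to-\infty$ then $xy-f_{\eps_n}(x)-g_{\eps_n}(y)\to+\infty$ uniformly and the left side of \eqref{eq:solution} diverges, contradicting $=\eps_n\le 1$; if $\inf_{\Omega_1}g_{\eps_n}\to+\infty$ then $\SXeps{x}=\emptyset$ for large $n$, contradicting $\mu_1(\SXeps{x})>0$. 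Either way $g_\eps$ is uniformly bounded, and the rest of your argument goes through.
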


We already know that $$f_\varepsilon' ( x )= \frac{\int_{\Sxeps} y   u_1( y ) dy}{\mu_1(\Sxeps)}  $$
is monotone, hence differentiable a.e. We shall derive an expression for the derivative $f_\varepsilon'' ( x )$.  Note that numerator and denominator are both of the form 
\begin{align}\label{eq:Xi}
  \Xi( x )=  \int_{y_{m}(x)}^{y_M(x)} h(y) d y
\end{align}
    for a continuous function $h$ on $\Omega_1$. We first establish that $y_{m}(x)$ and $y_{M}(x)$ are continuous functions of~$x$ which are differentiable except at those points~$x$ where the boundary of $\Sxeps$ overlaps with the boundary of $\Omega_{1}$. The continuity follows from the continuity of $f_{\eps}$ and convexity of $g_{\eps}$ (recall that $|\Sxeps|>0$ for all $x\in\Omega_{0}$); cf.\ \cref{ContinuityofT}. Turning to the differentiability, recall from \cref{as:main} that $\Omega_1 =[a_1, b_1]$. We start by 
claiming that
\begin{align*}
   \{x\in \Omega_0: \ y_{M}(x)=b_1  \ {\rm and}\ \xi(x,  y_{M}(x))=0\} \quad&\mbox{is at most a singleton, denoted $\{x^{(M)}\}$,}\\
   \{x\in \Omega_0: \ y_{m}(x)=a_1  \ {\rm and}\ \xi(x,  y_{m}(x))=0\} \quad &\mbox{is at most a singleton, denoted $\{x^{(m)}\}$. }
\end{align*} 
By~\eqref{eq:fprime}, for any $(x,y)\in\Omega_{0}\times\Omega_{1}$,
\begin{align}\label{eq:forContrad}
   y=f'_{\eps}(x) \quad \mbox{implies} \quad y\in (y_{m}(x),y_{M}(x)) \subset(a_{1},b_{1}).
\end{align}     
Fix $y\in \Omega_1$ and suppose there are two points $x_1< x_2$ in $\Omega_{0}$ such that 
$ 0= \xi(x_1,y)= \xi(x_{2},y).$
Then as $ x\mapsto \xi(x,y)=x y-g_\varepsilon(y)-f_\varepsilon(x)$ is differentiable, there exists $x\in(x_{1},x_{2})$ with 
$$
  0 = \partial_{x}\xi(x,y) = y-f'_\varepsilon(x).
$$    
Hence~\eqref{eq:forContrad} shows $y\neq a_{1},b_{1}$ and both claims follow.

The qualitative convergence result \cref{pr:qualitative} shows  $\sup_{x\in \Omega_0}|y_{M}(x)-y_{m}(x)|\to0$ as $\eps\to0$. In particular, we can pick $\varepsilon_0>0$ such that $\sup_{x\in \Omega_0}|y_{M}(x)-y_{m}(x)|\leq \frac{1}{3}|\Omega_1|$ for $\varepsilon<\varepsilon_0$, which implies that $\Sxeps$ can include \emph{at most one} of the two boundary points of $\Omega_1$. Fix $\varepsilon<\varepsilon_0$. We partition $\Omega_0\setminus \{x^{(m)}, x^{(M)}\}$ into the three (relatively open) sets where $\Sxeps$ includes the lower, no, or upper boundary point:
	\begin{align*}
	\Omega_0^{(1)}&=\{x\in \Omega_0: \ y_{M}(x)<b_1 \ {\rm and}\ \xi(x,  y_{m}(x))>0\},\\ 
  \Omega_0^{(2)}&=\{x\in \Omega_0: \ a_1<y_{m}(x)<y_{M}(x)<b_1\},\\
  \Omega_0^{(3)}&=\{x\in \Omega_0: \ \xi(x,  y_{M}(x))>0 \ {\rm and}\ y_{m}(x)>a_1  \}.    
\end{align*}
If $x\in \Omega_0^{(2)}\cup \Omega_0^{(3)}$, then $y_{m}(x)$ is a zero of 
$$ y\mapsto  \xi(x,y)= xy-g_\varepsilon(y)-f_\varepsilon(x). $$
Since
$ \partial_y \xi(x,y)= x- g_\varepsilon'(y) $
cannot be zero at $y_{m}(x)$ by~\eqref{eq:forContrad}, the implicit function theorem yields 
\begin{align}\label{eq:ymderiv}
y_{m}'(x) =-\frac{y_m(x)- f_\varepsilon'(x)}{x- g_\varepsilon'(y_{m}(x))}. 
\end{align} 
(For $x= x^{(m)}$, this holds only if $y_{m}'(x)$ is interpreted as the right derivative, whereas the left derivative is zero.)  
Note that $y_m(x)- f_\varepsilon'(x)<0$ by \eqref{eq:forContrad} whereas $x- g_\varepsilon'(y_{m}(x))=\partial_y \xi(x,y_{m}(x)) >0$ by concavity. Hence $y_{m}'(x)>0$, showing that $y_{m}$ is strictly increasing on $\Omega_0^{(2)}\cup \Omega_0^{(3)}$.
For $x\in \Omega_0^{(1)}\cup \Omega_0^{(2)}$, the same argument yields 
\begin{align}\label{eq:yMderiv}
y_{M}'(x) =-\frac{y_M(x)- f_\varepsilon'(x)}{x- g_\varepsilon'(y_{M}(x))}>0
\end{align}
and strict monotonicity of  $x\mapsto y_{M}'(x)$ on $\Omega_0^{(1)}\cup \Omega_0^{(2)}$. In particular, we have established that $y_{m},y_{M}$ are increasing continuous functions. The definition of the sets $\Omega_0^{(i)}$ then implies that each $\Omega_0^{(i)}$ is an interval.  

Next, we check that $\Omega_0^{(i)}\neq\emptyset$ for all $i$. This is clear for $\Omega_0^{(2)}$ (otherwise, by the choice of~$\varepsilon_0$, no mass is transported to the middle third of $\Omega_{1}$). 
We claim that $g_\varepsilon' (a_{1})\in\Omega_0^{(1)}$. Note first that, by the symmetric counterpart of~\eqref{eq:fprime}, $$g_\varepsilon' (a_{1})=\frac{\int_{\Syeps{a_{1}}} x d\mu_0(x)}{\mu_0(\Syeps{a_{1}})}\in \Syeps{a_{1}}\subset\Omega_{0},$$  
where $\Syeps{y}:=\{ x \in \Omega_0: \ f_\varepsilon( x )+g_\varepsilon( y )-x y \leq 0 \}$. 
The concave function $\xi(g_\varepsilon' (a_{1}),\cdot)$ attains its maximum at any $y\in \Omega_1$ solving  $ g_\varepsilon' (a_{1})-g'_{\eps}(y) =0$. Clearly $y=a_{1}$ satisfies the latter, so that $a_{1}\in \argmax_{y\in \Omega_1}\xi(g_\varepsilon' (a_{1}),y)$. 
Hence, we must have $\xi(g_\varepsilon' (a_{1}),a_1)>0$, for otherwise $\mu_{1}(\Syeps{a_{1}})=0$ contradicting~\eqref{eq:solution}. It follows that $y_m(g_\varepsilon' (a_{1}))= a_1$ and, as $|\SXeps{g_\varepsilon' (a_{1})}|\leq \frac13 |\Omega_{1}|$, that $y_M(g_\varepsilon' (a_{1}))< b_1$. In summary, $g_\varepsilon' (a_{1})\in\Omega_0^{(1)}$. 
Similarly, $g_\varepsilon'(b_{1})\in\Omega_0^{(3)}$. 

As $\Omega_0^{(i)}\neq\emptyset$ for all $i$, it follows that the points $x^{(m)}$ and $x^{(M)}$ indeed exist and lie in the interior of $\Omega_{0}=[a_{0},b_{0}]$. Since the union of the three intervals is $\Omega_0\setminus \{x^{(m)}, x^{(M)}\}$, necessarily
$$
  \Omega_0^{(1)}=[a_{0},x^{(m)}), \qquad \Omega_0^{(2)}=(x^{(m)},x^{(M)}), \qquad \Omega_0^{(3)}=(x^{(M)},b_{0}].
$$
The following summarizes our discussion.
    
\begin{proposition}\label{pr:monotoneSupport}
  Let $\eps>0$ be small enough such that $\sup_{x\in \Omega_0}|\Sxeps|\leq \frac{1}{3}|\Omega_1|$. There is exactly one $x\in (a_{0},b_{0})$ with $y_{M}(x)=b_1$ and $\xi(x,  y_{M}(x))=0$, denoted $x^{(M)}$, and exactly one $x\in (a_{0},b_{0})$ with $y_{m}(x)=a_1$ and $\xi(x,  y_{m}(x))=0$, denoted $x^{(m)}$. The interval-valued map $x\mapsto \Sxeps=[y_{m}(x),y_{M}(x)]$ is increasing in the sense that 
$$x\mapsto y_{m}(x) \qandq x\mapsto y_{M}(x) \quad\mbox{are increasing.}$$
More precisely, $y_{m}(x)=a_{1}$ for $x\in [a_{0}, x^{(m)}]$ whereas $x\mapsto y_{m}(x)$ is strictly increasing on $(x^{(m)},b_{0}]$ with derivative~\eqref{eq:ymderiv}. Similarly, $y_{M}(x)=b_{1}$ for $x\in [x^{(M)},b_{0}]$ whereas $x\mapsto y_{M}(x)$ is strictly increasing on $[a_{0}, x^{(M)})$ with derivative~\eqref{eq:yMderiv}.
\end{proposition}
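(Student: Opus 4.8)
The plan is to assemble the facts established in the discussion preceding the statement into a single argument; the backbone consists of three ingredients: a mean-value argument for the uniqueness of the contact points $x^{(m)},x^{(M)}$, the implicit function theorem (with a sign check) for the boundary functions $y_m,y_M$, and a nonemptiness argument for the three regimes $\Omega_0^{(1)},\Omega_0^{(2)},\Omega_0^{(3)}$ based on the marginal constraint~\eqref{eq:solution}.

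First I would record that $y_m$ and $y_M$ are continuous in $x$: this follows from the continuity of $f_\varepsilon$, the convexity of $g_\varepsilon$ (which makes $\Sxeps$ a genuine interval), and $\mu_1(\Sxeps)>0$, which prevents degeneration. Then comes uniqueness: if $y\in\{a_1,b_1\}$ and $\xi(x_1,y)=\xi(x_2,y)=0$ for some $x_1<x_2$, then since $x\mapsto\xi(x,y)=xy-g_\varepsilon(y)-f_\varepsilon(x)$ is differentiable, Rolle's theorem produces $x\in(x_1,x_2)$ with $f_\varepsilon'(x)=y$, which by~\eqref{eq:fprime} forces $y\in(y_m(x),y_M(x))\subset(a_1,b_1)$, a contradiction. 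Hence each contact set is at most a singleton; their nonemptiness is deferred.

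For monotonicity, fix $\varepsilon$ small enough that $\sup_x|\Sxeps|\le\frac13|\Omega_1|$, possible by \cref{pr:qualitative}; then $\Sxeps$ meets at most one endpoint of $\Omega_1$, so $\Omega_0\setminus\{x^{(m)},x^{(M)}\}$ decomposes into the relatively open sets $\Omega_0^{(1)},\Omega_0^{(2)},\Omega_0^{(3)}$ according to whether $\Sxeps$ touches the lower, no, or upper endpoint. On $\Omega_0^{(2)}\cup\Omega_0^{(3)}$ the point $y_m(x)$ is an interior zero of the concave slice $y\mapsto\xi(x,y)$, and $\partial_y\xi(x,y_m(x))=x-g_\varepsilon'(y_m(x))$ is nonzero by~\eqref{eq:forContrad} and positive by concavity (a non-positive value would force $\Sxeps$ to be a single point); the implicit function theorem then gives~\eqref{eq:ymderiv}, whose numerator $y_m(x)-f_\varepsilon'(x)$ is negative by~\eqref{eq:forContrad} and denominator positive, so $y_m'(x)>0$. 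Hence $y_m$ is strictly increasing on $\Omega_0^{(2)}\cup\Omega_0^{(3)}$, while $y_m\equiv a_1$ on $\Omega_0^{(1)}$; symmetrically $y_M'(x)>0$ on $\Omega_0^{(1)}\cup\Omega_0^{(2)}$ via~\eqref{eq:yMderiv} and $y_M\equiv b_1$ on $\Omega_0^{(3)}$. Since $y_m,y_M$ are continuous on all of $\Omega_0$, gluing across $x^{(m)},x^{(M)}$ shows they are globally increasing, and each $\Omega_0^{(i)}$, being cut out by monotone inequalities on continuous functions, is an interval.

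It remains to show all three intervals are nonempty, which I expect to be the least routine step. For $\Omega_0^{(2)}$ this is immediate, since otherwise no mass would be transported into the middle third of $\Omega_1$, contradicting that $\mu_1$ has density bounded below. For $\Omega_0^{(1)}$, evaluate at $x_\ast:=g_\varepsilon'(a_1)$, which lies in $\Syeps{a_1}\subset\Omega_0$ by the symmetric counterpart of~\eqref{eq:fprime}; the concave slice $\xi(x_\ast,\cdot)$ is maximized at $y=a_1$ (there $\partial_y\xi=g_\varepsilon'(a_1)-g_\varepsilon'(a_1)=0$), so if $\xi(x_\ast,a_1)\le0$ the whole slice would be $\le0$, contradicting $\mu_1(\SXeps{x_\ast})>0$ from~\eqref{eq:solution}; thus $\xi(x_\ast,a_1)>0$, giving $y_m(x_\ast)=a_1$ and, by the smallness bound, $y_M(x_\ast)<b_1$, so $x_\ast\in\Omega_0^{(1)}$. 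Symmetrically $g_\varepsilon'(b_1)\in\Omega_0^{(3)}$. Now nonemptiness of all three intervals forces $x^{(m)},x^{(M)}$ to exist and to lie in $(a_0,b_0)$, and the ordering together with the fact that the union of the three intervals is $\Omega_0\setminus\{x^{(m)},x^{(M)}\}$ pins down $\Omega_0^{(1)}=[a_0,x^{(m)})$, $\Omega_0^{(2)}=(x^{(m)},x^{(M)})$, $\Omega_0^{(3)}=(x^{(M)},b_0]$, from which the ``more precisely'' assertions follow at once.
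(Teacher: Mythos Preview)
Your proposal is correct and follows essentially the same route as the paper: the mean-value/Rolle argument via~\eqref{eq:forContrad} for uniqueness of the contact points, the implicit function theorem with the concavity sign check for~\eqref{eq:ymderiv}--\eqref{eq:yMderiv}, and the nonemptiness of the three regimes by the mass argument for $\Omega_0^{(2)}$ together with the specific witnesses $g_\varepsilon'(a_1)\in\Omega_0^{(1)}$ and $g_\varepsilon'(b_1)\in\Omega_0^{(3)}$. The only cosmetic remark is that your appeal to~\eqref{eq:forContrad} for $\partial_y\xi(x,y_m(x))\neq0$ should, strictly speaking, invoke the symmetric counterpart (it is $g_\varepsilon'$, not $f_\varepsilon'$, that appears); your parenthetical concavity argument already covers this and is in fact self-contained.
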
 
    
    We can now take the derivative in~\eqref{eq:Xi}: Leibniz' rule yields that $f'$ is differentiable on $\Omega_0\setminus \{x^{(m)}, x^{(M)}\}$ with derivative
    \begin{align*}
        \Xi'( x )=    h(y_M(x)) \frac{ f_\varepsilon'(x)-y_M(x)}{x- g_\varepsilon'(y_{M}(x))} \chi_{\Omega_0^{(1)}\cup \Omega_0^{(2)} }(x)-h(y_m(x)) \frac{ f_\varepsilon'(x)-y_m(x)}{x- g_\varepsilon'(y_{m}(x))} \chi_{\Omega_0^{(2)}\cup \Omega_0^{(3)} }(x).
    \end{align*}
Hence, we get
$$ ({\mu_1(\Sxeps)})'=    u_1(y_M(x)) \frac{ f_\varepsilon'(x)-y_M(x)}{x- g_\varepsilon'(y_{M}(x))} \chi_{\Omega_0^{(1)}\cup \Omega_0^{(2)} }(x)-u_1(y_m(x)) \frac{ f_\varepsilon'(x)-y_m(x)}{x- g_\varepsilon'(y_{m}(x))} \chi_{\Omega_0^{(2)}\cup \Omega_0^{(3)} }(x) $$
    and 
    \begin{multline*}
         \left(\int_{\Sxeps} y   u_1( y ) dy \right)'=  u_1(y_M(x)) y_M(x)  \frac{ f_\varepsilon'(x)-y_M(x)}{x- g_\varepsilon'(y_{M}(x))} \chi_{\Omega_0^{(1)}\cup \Omega_0^{(2)} }(x)\\
         - u_1(y_m(x)) y_m(x)\frac{ f_\varepsilon'(x)-y_m(x)}{x- g_\varepsilon'(y_{m}(x))} \chi_{\Omega_0^{(2)}\cup \Omega_0^{(3)} }(x),
    \end{multline*}
 and then
 \begin{align*}
      f_\varepsilon'' ( x )&= \frac{ {\mu_1(\Sxeps)} \left( \int_{\Sxeps} y   u_1( y ) dy \right)'-({\mu_1(\Sxeps)})' \int_{\Sxeps} y   u_1( y ) dy}{({\mu_1(\Sxeps)})^2}\\
      &=\frac{\left( \int_{\Sxeps} y   u_1( y ) dy \right)' -({\mu_1(\Sxeps)})' f'_\varepsilon(y) }{{\mu_1(\Sxeps)} } %
 \end{align*} 
yields the following result. 
\begin{proposition}\label{pr:fDerivativeFormula}
   Let $\eps>0$ be small enough such that $\sup_{x\in \Omega_0}|\Sxeps|\leq \frac{1}{3}|\Omega_1|$. Then using the notation introduced above, 
    \begin{multline}\label{secondderivativeShape}
        f_\varepsilon'' ( x )= u_1(y_m(x)) \frac{ (f_\varepsilon'(x)-y_m(x))^2}{( x-g_\varepsilon'(y_{m}(x)))\mu_1(\Sxeps)}  \chi_{\Omega_0^{(2)}\cup \Omega_0^{(3)}}(x) \\
        + u_1(y_M(x)) \frac{ (f_\varepsilon'(x)-y_M(x))^2}{( g_\varepsilon'(y_{M}(x))-x)\mu_1(\Sxeps)}  \chi_{\Omega_0^{(1)}\cup \Omega_0^{(2)}}(x).
    \end{multline} 
\end{proposition}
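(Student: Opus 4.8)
\emph{Plan of proof.} The idea is simply to differentiate the explicit formula $f_\varepsilon'=N/D$, where $N(x)=\int_{\Sxeps}y\,d\mu_1(y)$ and $D(x)=\mu_1(\Sxeps)$ are the numerator and denominator in \eqref{eq:fprime}; both are instances of the quantity $\Xi(x)=\int_{y_m(x)}^{y_M(x)}h(y)\,dy$ of \eqref{eq:Xi}, with $h(y)=y\,u_1(y)$ and $h(y)=u_1(y)$, respectively. By \cref{pr:monotoneSupport} and the analysis preceding it, on each of the open intervals $\Omega_0^{(1)},\Omega_0^{(2)},\Omega_0^{(3)}$ the endpoint maps $y_m,y_M$ are continuously differentiable, with $y_m'\equiv0$ on $\Omega_0^{(1)}$ and $y_M'\equiv0$ on $\Omega_0^{(3)}$, and elsewhere $y_m',y_M'$ given by \eqref{eq:ymderiv} and \eqref{eq:yMderiv}, whose denominators are strictly signed---$x-g_\varepsilon'(y_m(x))>0$ and $g_\varepsilon'(y_M(x))-x>0$---by concavity of $y\mapsto\xi(x,y)$ together with \eqref{eq:fprime}. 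Hence Leibniz' rule, combined with \eqref{eq:ymderiv} and \eqref{eq:yMderiv}, gives the formula for $\Xi'$ displayed above, and since $D=\mu_1(\Sxeps)>0$ the quotient rule shows that $f_\varepsilon'$ is differentiable on $\Omega_0\setminus\{x^{(m)},x^{(M)}\}$ with $f_\varepsilon''=(N'D-ND')/D^2$.

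It remains to simplify this expression, which I would do by expanding $N'D-ND'$ and collecting the terms proportional to $u_1(y_M)\,y_M'$ and to $u_1(y_m)\,y_m'$; their coefficients collapse to $D\,(y_M-f_\varepsilon')$ and $-D\,(y_m-f_\varepsilon')$ precisely because $f_\varepsilon'=N/D$ is the $\mu_1$-barycenter of $\Sxeps$, i.e.\ $\int_{\Sxeps}(y-f_\varepsilon'(x))\,d\mu_1(y)=0$. One factor of $D$ then cancels in $(N'D-ND')/D^2$, and substituting \eqref{eq:ymderiv}, \eqref{eq:yMderiv} turns $(y_M-f_\varepsilon')\,y_M'$ into $-(f_\varepsilon'-y_M)^2/(x-g_\varepsilon'(y_M))$ and $-(y_m-f_\varepsilon')\,y_m'$ into $(f_\varepsilon'-y_m)^2/(x-g_\varepsilon'(y_m))$. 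Reinserting the region indicators---which only record which of $y_m,y_M$ is frozen at an endpoint of $\Omega_1$ on each $\Omega_0^{(i)}$---yields \eqref{secondderivativeShape}.

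I do not expect a genuine obstacle here. The substantive inputs---differentiability and monotonicity of $y_m,y_M$, the strict signs of $x-g_\varepsilon'(y_m(x))$ and $x-g_\varepsilon'(y_M(x))$, and the fact that for $\varepsilon$ small enough that $\sup_x|\Sxeps|\le\tfrac13|\Omega_1|$ each $\Sxeps$ meets at most one endpoint of $\Omega_1$, so that $\Omega_0=\Omega_0^{(1)}\cup\{x^{(m)}\}\cup\Omega_0^{(2)}\cup\{x^{(M)}\}\cup\Omega_0^{(3)}$---were all secured in the build-up to \cref{pr:monotoneSupport}. The points $x^{(m)},x^{(M)}$ have to be excluded because there the relevant endpoint map has only a one-sided derivative (the other one-sided derivative being zero). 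The only place demanding care is the bookkeeping of the indicator functions and the sign-tracking in the simplification, and the latter comes out cleanly thanks to the barycenter identity.
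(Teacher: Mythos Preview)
Your proposal is correct and is exactly the paper's approach---Leibniz' rule on $\Xi$, then the quotient rule, with the paper's ``calculations that we omit'' being precisely your barycenter simplification $N'D-ND'=D\,[u_1(y_M)\,y_M'(y_M-f_\varepsilon')-u_1(y_m)\,y_m'(y_m-f_\varepsilon')]$. One small caveat: carrying your computation through yields $\mu_1(\Sxeps)$ rather than $y_M(x)-y_m(x)$ in the denominator of \eqref{secondderivativeShape}; the two differ only by a factor in $[\lambda,\Lambda]$ under \cref{as:main}, which is all that is used in the subsequent bounds \eqref{secondderivativeUpper}--\eqref{secondderivativeLower}, so this is a harmless slip in the displayed formula rather than a gap in your argument.
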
 

\begin{remark}\label{rk:2ndDerivative}
  Of course there is also a regime (for large $\eps$, due to $\pi_{\eps}\to\mu_{0}\otimes\mu_{1}$ \cite[Remark~2.9]{GonzalezSanzNutz.24}) where $\xi(x,\cdot)$ is strictly positive on $\Omega_{1}$ and in particular $\SXeps{x'}=\Omega_{1}$ for $x'$ close to $x$. Then it is immediate from~\eqref{eq:fprime} that $f''_{\eps}(x)=0$. (In fact, when $\pi_{\eps}$ has full support, the functions $f_{\eps}$ and $g_{\eps}$ are affine and can be found in closed form.) In particular, the condition $\eps\leq\eps_{0}$ is essential to guarantee the strong convexity in \cref{th:Reg}.
\end{remark} 
    
\section{Bounds for the second derivative}\label{se:Bounds2ndDerivative}

Recall our notation
$$ \xi(x,y)=-f_\varepsilon(x)-g_\varepsilon(y)+xy, \qquad \Sxeps=\{y\in\Omega_{1}: \ \xi(x,y)\geq0\} =[y_{m}(x), y_M(x)]. $$ 
In this section, we bound $f_\varepsilon''$ by geometric arguments. First, in \cref{Sec:Comparison}, we relate $|\Sxeps|$ to the maximum of $\xi(x,\cdot)$; i.e., to the maximum of the density of $\pi_{\eps}$. In \cref{subse:dist}, we show that $f'_\varepsilon(x)$ is approximately at the center of the support $\Sxeps$. \Cref{subse:Bounds2ndDerivative} uses these two auxiliary results to prove a crucial intermediate bound relating $|\Sxeps|$ with the minimum and the maximum of $f''_\varepsilon$; cf.~\eqref{eq:boundSx}. Finally, we  deduce the desired bounds on $f_\varepsilon''$.

\subsection{Geometric approach}\label{Sec:Comparison}

For each $ x \in \Omega_0$, the function $\xi( x , \cdot)$ is concave, behaving roughly like a parabola open to the bottom. Our starting point is the following simple observation, relating 
the height $\max_{ y \in \Omega_1}\xi( x ,  y )$ to the size $|\Sxeps|$ of the support of $(\xi( x ,  \cdot ))_{+}$.

\begin{lemma}\label{Lemma:maximunPrincp}
    There exists $C>0$ such that   
    $$ C^{-1} \varepsilon  \leq \max_{ y \in \Omega_1}\xi( x ,  y ) |\Sxeps| \leq C \varepsilon $$
for all $ x \in \Omega_0$ and $\varepsilon>0$.
\end{lemma}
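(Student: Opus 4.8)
The plan is to squeeze the integral $\int_{\Sxeps}\xi(x,y)\,u_1(y)\,dy$, which equals $\varepsilon$ by~\eqref{eq:solution}, between two constant multiples of $M_x |\Sxeps|$, where I abbreviate $M_x := \max_{y\in\Omega_1}\xi(x,y)$ and recall $\Sxeps=[y_m(x),y_M(x)]$ is an interval by concavity of $\xi(x,\cdot)$ and convexity of $\Omega_1$. First note that~\eqref{eq:solution} and $u_1>0$ force $\{y\in\Omega_1:\xi(x,y)>0\}$ to have positive Lebesgue measure, so $M_x>0$ and $|\Sxeps|>0$; in particular any maximizer $y^\ast$ of $\xi(x,\cdot)$ over $\Omega_1$ satisfies $\xi(x,y^\ast)=M_x>0$, hence $y^\ast\in\Sxeps$.

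For the lower bound $C^{-1}\varepsilon\le M_x|\Sxeps|$ I would simply use $\xi(x,y)\le M_x$ on $\Sxeps$ together with $u_1\le\Lambda$, giving
$$\varepsilon=\int_{\Sxeps}\xi(x,y)\,u_1(y)\,dy\le \Lambda\,M_x\,|\Sxeps|.$$
For the upper bound $M_x|\Sxeps|\le C\varepsilon$ I would use $u_1\ge\lambda$ and then a ``triangle'' lower bound coming from concavity: since $\xi(x,\cdot)$ is concave on $[y_m(x),y_M(x)]$, is $\ge 0$ at the two endpoints, and equals $M_x$ at $y^\ast\in[y_m(x),y_M(x)]$, it dominates the piecewise-linear function joining $(y_m(x),0)$ to $(y^\ast,M_x)$ to $(y_M(x),0)$. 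Integrating that tent function over $\Sxeps$ yields exactly $\tfrac12 M_x|\Sxeps|$ (the two triangles have total area $\tfrac12 M_x(y^\ast-y_m(x))+\tfrac12 M_x(y_M(x)-y^\ast)$), so
$$\varepsilon=\int_{\Sxeps}\xi(x,y)\,u_1(y)\,dy\ \ge\ \lambda\int_{\Sxeps}\xi(x,y)\,dy\ \ge\ \frac{\lambda}{2}\,M_x\,|\Sxeps|.$$
Taking $C:=\max\{\Lambda,\,2/\lambda\}$ gives the claim, uniformly in $x\in\Omega_0$ and $\varepsilon>0$ since $\lambda,\Lambda$ do not depend on them.

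There is essentially no serious obstacle here; the one point requiring a line of care is the degenerate situation where the maximizer $y^\ast$ sits on $\partial\Omega_1$ and therefore coincides with an endpoint of $\Sxeps$ (so one of the two triangles is empty). In that case the tent function is a single triangle of area $\tfrac12 M_x|\Sxeps|$, so the bound $\int_{\Sxeps}\xi(x,y)\,dy\ge\tfrac12 M_x|\Sxeps|$ still holds with the same constant, and the argument goes through unchanged. (Alternatively one can avoid this by invoking~\eqref{eq:fprime} to locate a point of $\Sxeps$ strictly between the endpoints, but it is not needed.)
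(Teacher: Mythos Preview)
Your proposal is correct and follows essentially the same approach as the paper's proof: both use $u_1\le\Lambda$ and the trivial bound $\xi(x,\cdot)\le M_x$ for the lower inequality, and $u_1\ge\lambda$ together with the concave ``tent'' lower bound (the piecewise-linear interpolant through $(y_m(x),0)$, $(y^\ast,M_x)$, $(y_M(x),0)$) for the upper one, arriving at the same constants $\Lambda$ and $\lambda/2$. Your remark on the degenerate case $y^\ast\in\partial\Omega_1$ is a nice addition that the paper leaves implicit.
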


\begin{proof}
Using \eqref{eq:solution} and \cref{as:main},
$$ \varepsilon=\int (\xi( x ,  y ))_+ d\mu_{1}(y)\leq \Lambda \int_{\Omega_1} (\xi( x ,  y ))_+ dy \leq \Lambda |\Sxeps|  \max_{ y \in \Omega_1}\xi( x ,  y ), $$
giving the lower bound. On the other hand, as $y\mapsto \xi( x ,  y )$ is concave, its graph lies above the graph of the piecewise affine function $\ell(y)$ interpolating the three points 
    $$ (y_m(x), \xi( x ,  y_m(x) )), \qquad  \bigg(\argmax_{y\in \Omega_1} \xi( x ,  y ), \max_{y\in \Omega_1}\xi( x ,  y_m(x)) \bigg), \qquad  
    (y_M(x), \xi( x ,  y_M(x) )),$$
which, in turn, lies above the ``tent'' interpolating the points
    $$ (y_m(x), 0 ), \qquad  \bigg(\argmax_{y\in \Omega_1} \xi( x ,  y ), \max_{y\in \Omega_1}\xi( x ,  y_m(x)) \bigg), \qquad 
    (y_M(x), 0),$$
by the definition of $y_m(x),y_M(x)$. The integral of the latter tent corresponds to the area of a triangle, giving
    $$ \varepsilon=\int (\xi( x ,  y ))_+ d\mu_1(y)\geq \lambda \int_{\Omega_1} (\xi( x ,  y ))_+ dy \geq \lambda \int_{\Omega_1} (\ell(y))_+ dy  \geq \frac{\lambda}{2}|\Sxeps|  \max_{ y \in \Omega_1}\xi( x ,  y )$$
after recalling that $|\Sxeps|=y_M(x)-y_m(x)$. This establishes the upper bound.
\end{proof}

In view of \cref{Lemma:maximunPrincp}, our goal is to control the ratio between $\max_{ y \in \Omega_1}\xi( x ,  y )$ and $ \mu_1(\Sxeps)$ (or equivalently $|\Sxeps|$). Clearly this ratio is closely related to the second derivative of $\xi( x ,  \cdot )$, or equivalently $g''_{\eps}$. The following result shows the equivalence of $\max_{ y \in \Omega_1}\xi( x ,  y )$ and $\max_{ x '\in \Omega_0}\xi( x ',f_\varepsilon'( x ))$, which will be important for controlling~$f''_\varepsilon$ and $g''_\varepsilon$ later on.

\begin{lemma}\label{Lemma:comparemax}
There exists $C>0$ such that for every $ x \in \Omega_0$, 
\begin{align*}
  &C^{-1}  \max_{ y \in \Omega_1} \xi( x ,  y ) \,\leq\,   \xi( x , f_\varepsilon'( x ))\,= \,\max_{ x'\in \Omega_0} \xi( x',f_\varepsilon'( x ))\,\leq\, \max_{ y \in \Omega_1} \xi( x , y ).
\end{align*}
\end{lemma}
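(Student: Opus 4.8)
The plan is to prove the three relations in the displayed chain separately, reading from the middle outwards. The middle equality $\xi(x,f_\varepsilon'(x))=\max_{x'\in\Omega_0}\xi(x',f_\varepsilon'(x))$ is the easiest: fix $y_0:=f_\varepsilon'(x)$ and consider the map $x'\mapsto\xi(x',y_0)=-f_\varepsilon(x')+x'y_0-g_\varepsilon(y_0)$. Since $f_\varepsilon$ is convex (\cref{ContinuityofT}), this map is concave, and since $f_\varepsilon\in\mathcal{C}^1(\Omega_0)$ its derivative in $x'$ equals $y_0-f_\varepsilon'(x')$, which vanishes at $x'=x$ by the choice $y_0=f_\varepsilon'(x)$. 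A concave function on the interval $\Omega_0$ whose (possibly one-sided) derivative vanishes at a point attains its maximum there, which gives the equality. The rightmost inequality $\xi(x,f_\varepsilon'(x))\leq\max_{y\in\Omega_1}\xi(x,y)$ is then immediate, because $f_\varepsilon'(x)\in\Int\Sxeps\subset\Omega_1$ by \cref{ContinuityofT}.

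The content of the lemma is the leftmost inequality. Write $\phi(y):=\xi(x,y)$, which is concave on $\Omega_1$, and $\Sxeps=[y_m(x),y_M(x)]$ with $\phi\geq0$ on $\Sxeps$. First I would quantify how deep $f_\varepsilon'(x)$ sits inside $\Sxeps$: since $f_\varepsilon'(x)=\mu_1(\Sxeps)^{-1}\int_{\Sxeps}y\,d\mu_1(y)$ is a $\mu_1$-weighted average over the interval $\Sxeps$ and $\lambda\leq u_1\leq\Lambda$, a direct computation gives $f_\varepsilon'(x)-y_m(x)\geq\frac{\lambda}{2\Lambda}|\Sxeps|$ and, symmetrically, $y_M(x)-f_\varepsilon'(x)\geq\frac{\lambda}{2\Lambda}|\Sxeps|$. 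Next, let $y^{\ast}\in\Omega_1$ attain $M^{\ast}:=\max_{\Omega_1}\phi$; since $\phi(f_\varepsilon'(x))>0$ (as $f_\varepsilon'(x)\in\Int\Sxeps$) we have $M^{\ast}>0$, hence $y^{\ast}\in\{y\in\Omega_1:\phi(y)\geq0\}=\Sxeps$, so in particular $|y^{\ast}-y_m(x)|\leq|\Sxeps|$ and $|y^{\ast}-y_M(x)|\leq|\Sxeps|$. Now apply concavity: $f_\varepsilon'(x)$ lies between $y^{\ast}$ and one of the endpoints $y_m(x),y_M(x)$; comparing $\phi$ at $f_\varepsilon'(x)$ with the chord joining that endpoint (where $\phi\geq0$) to $y^{\ast}$ (where $\phi=M^{\ast}$), and inserting the two distance estimates above, yields $\phi(f_\varepsilon'(x))\geq\frac{\lambda}{2\Lambda}M^{\ast}$. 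This is exactly $C^{-1}\max_{y\in\Omega_1}\xi(x,y)\leq\xi(x,f_\varepsilon'(x))$ with $C=2\Lambda/\lambda$.

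I do not expect a genuine obstacle here; the only point requiring care is the bookkeeping in the concavity/chord estimate (distinguishing the cases $f_\varepsilon'(x)\leq y^{\ast}$ and $f_\varepsilon'(x)\geq y^{\ast}$ so that the correct endpoint of $\Sxeps$ is used) together with the elementary but essential fact that the density bounds force $f_\varepsilon'(x)$ to be at distance of order $|\Sxeps|$ from both endpoints of $\Sxeps$. The constant obtained this way depends only on $\lambda$ and $\Lambda$, consistently with the statement.
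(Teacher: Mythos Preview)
Your proof is correct, but it follows a different route than the paper's. For the leftmost inequality, the paper integrates the marginal constraint \eqref{eq:solution} over~$\Sxeps$, divides by~$\mu_1(\Sxeps)$, and applies Jensen's inequality to the convex function~$g_\varepsilon$ to obtain $\xi(x,f_\varepsilon'(x))\geq \varepsilon/\mu_1(\Sxeps)$; the conclusion then follows from \cref{Lemma:maximunPrincp}. Your argument is instead purely geometric: you first show (using only $\lambda\leq u_1\leq\Lambda$) that $f_\varepsilon'(x)$ sits at distance at least $\tfrac{\lambda}{2\Lambda}|\Sxeps|$ from each endpoint of~$\Sxeps$, and then use a chord estimate for the concave function $\phi=\xi(x,\cdot)$ between the maximizer~$y^\ast$ and the appropriate endpoint. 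This avoids both Jensen and \cref{Lemma:maximunPrincp}, is valid for every $\varepsilon>0$, and produces the explicit constant $C=2\Lambda/\lambda$. Your intermediate distance estimate is in fact a variant of the later \cref{lemma:distanceBound} (there proved only for small~$\varepsilon$ via uniform continuity of~$u_1$, with constant~$1/4$); your version holds for all~$\varepsilon$ at the price of a weaker constant. The paper's route, on the other hand, yields the useful pointwise bound $\xi(x,f_\varepsilon'(x))\geq \varepsilon/\mu_1(\Sxeps)$, which ties the density maximum directly to the marginal constraint.
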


\begin{proof}
    We can write \eqref{eq:solution} as
    $$ \int_{\Sxeps}xy  d\mu_1( y )-  f_\varepsilon( x ) \mu_1(\Sxeps)  -\int_{\Sxeps}g_\varepsilon( y ) d\mu_1( y )= \varepsilon.    $$
   Therefore, 
     $$  
\frac{\int_{\Sxeps}xy  d\mu_1( y )}{\mu_1(\Sxeps)}
   -f_\varepsilon( x )  - \frac{\int_{\Sxeps}g_\varepsilon( y ) d\mu_1( y )}{\mu_1(\Sxeps)}  = \frac{\varepsilon}{\mu_1(\Sxeps)}.    $$
Recalling from \cref{ContinuityofT} the formula
$f_\varepsilon' ( x )=  \frac{\int_{\Sxeps} y   d\mu_1( y )}{\mu_1(\Sxeps)}, $
we deduce
  $$ x f_\varepsilon'( x )-f_\varepsilon( x )  - \frac{\int_{\Sxeps}g_\varepsilon( y ) d\mu_1( y )}{\mu_1(\Sxeps)}
     = \frac{\varepsilon}{\mu_1(\Sxeps)}  . $$
As  $g_\varepsilon$ is convex,  Jensen's inequality and the aforementioned formula yield
$$ \frac{\varepsilon}{\mu_1(\Sxeps)} \leq - f_\varepsilon( x )  - g_\varepsilon(f_\varepsilon'( x ))+ x f_\varepsilon'( x )  = \xi(x,f_\varepsilon'( x )),
$$
and now the first inequality follows from \cref{Lemma:maximunPrincp}. The second inequality follows trivially, whereas the equality holds by the first-order condition: $y:=f_\varepsilon'( x )$ satisfies $\frac{d}{dx} \xi(x,y)=0$.
\end{proof}

\subsection{Distance of $f'_\varepsilon(x)$ to the boundary of $\Sxeps$} \label{subse:dist}

\Cref{ContinuityofT} shows that $f_\varepsilon'(x)\in \Sxeps$. The following result states that the distance between $f'_\varepsilon(x)$ and either of the two boundary points $y_m(x),y_M(x)$ of $\Sxeps$ is of the same order as $|\Sxeps|=y_M(x)-y_m(x)$, and more precisely that $f'_\varepsilon(x)$ is located in the center half of $\Sxeps$ for small $\varepsilon$.

\begin{lemma}\label{lemma:distanceBound}
    There exists $\varepsilon_0>0$ such that for all $\varepsilon<\varepsilon_0$ and $x\in \Omega_0$,
    \begin{alignat*}{2}
    \frac14 |\Sxeps|&\leq  y_M(x)- f'_\varepsilon(x) && \leq   |\Sxeps|, \\[.2em]
    \frac14 |\Sxeps|&\leq   f'_\varepsilon(x)-y_m(x)&& \leq   |\Sxeps|.
    \end{alignat*}
\end{lemma}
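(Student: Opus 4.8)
The plan is to use the barycenter formula $f_\varepsilon'(x)=\frac{\int_{\Sxeps} y\,d\mu_1(y)}{\mu_1(\Sxeps)}$ from \cref{ContinuityofT}, which displays $f_\varepsilon'(x)$ as the $u_1$-weighted average of the interval $\Sxeps=[y_m(x),y_M(x)]$. Write $L:=|\Sxeps|=y_M(x)-y_m(x)>0$ and let $c:=\tfrac12\big(y_m(x)+y_M(x)\big)$ be its midpoint. If $u_1$ were constant on $\Sxeps$, this average would equal $c$ exactly and both claimed inequalities would hold (in fact with $\tfrac12$ in place of both $\tfrac14$ and $1$). The idea is that for small $\varepsilon$ the interval $\Sxeps$ is very short, uniformly in $x$ by \cref{pr:qualitative}, so $u_1$ is nearly constant on it and the weighted average lies within $\tfrac14 L$ of $c$.

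To make this quantitative, note that $\int_{\Sxeps}(y-c)\,dy=0$ by symmetry of $\Sxeps$ about $c$, so
\[
  f_\varepsilon'(x)-c=\frac{\int_{\Sxeps}(y-c)\,u_1(y)\,dy}{\int_{\Sxeps}u_1(y)\,dy}=\frac{\int_{\Sxeps}(y-c)\,\big(u_1(y)-u_1(c)\big)\,dy}{\int_{\Sxeps}u_1(y)\,dy}.
\]
Since $c\in\Sxeps$, the numerator is at most $\osc_{\Sxeps}(u_1)\int_{\Sxeps}|y-c|\,dy=\osc_{\Sxeps}(u_1)\,L^2/4$ in absolute value, while the denominator is at least $\lambda L$ by \cref{as:main}; hence $|f_\varepsilon'(x)-c|\le \tfrac{1}{4\lambda}\osc_{\Sxeps}(u_1)\,L$. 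By \cref{pr:qualitative}, $\sup_{x\in\Omega_0}|\Sxeps|\to0$ as $\varepsilon\to0$, and $u_1$ is uniformly continuous on the compact interval $\Omega_1$, so $\sup_{x\in\Omega_0}\osc_{\Sxeps}(u_1)\to0$. Thus there is $\varepsilon_0>0$ such that $\osc_{\Sxeps}(u_1)\le\lambda$ for all $x\in\Omega_0$ and $\varepsilon<\varepsilon_0$, which gives $|f_\varepsilon'(x)-c|\le\tfrac14 L$.

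It then remains to translate this into the two displayed inequalities: since $f_\varepsilon'(x)-y_m(x)=\tfrac L2+\big(f_\varepsilon'(x)-c\big)$ and $y_M(x)-f_\varepsilon'(x)=\tfrac L2-\big(f_\varepsilon'(x)-c\big)$, the bound $|f_\varepsilon'(x)-c|\le\tfrac14 L$ places both quantities in $[\tfrac14 L,\tfrac34 L]\subset[\tfrac14|\Sxeps|,|\Sxeps|]$, as claimed. I do not expect a genuine obstacle here; the only points requiring care are that the argument relies on the \emph{uniform} vanishing of $\sup_{x}|\Sxeps|$ from \cref{pr:qualitative}, and that it genuinely uses the continuity of $u_1$ (not just its boundedness) in \cref{as:main}---with mere boundedness one would only obtain a lower bound of the form $\tfrac{\lambda}{2\Lambda}|\Sxeps|$, which need not reach $\tfrac14|\Sxeps|$.
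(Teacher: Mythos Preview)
Your proof is correct and follows essentially the same approach as the paper: both show that $f_\varepsilon'(x)$ lies within $\tfrac14|\Sxeps|$ of the midpoint of $\Sxeps$ by exploiting the uniform continuity of $u_1$ together with the uniform shrinking $\sup_x|\Sxeps|\to0$ from \cref{pr:qualitative}. The only cosmetic differences are that the paper quantifies near-constancy of $u_1$ via the ratio $u_1(y)/u_1(y')\in[1-\eta,1+\eta]$ rather than the oscillation, and bounds $\int_{\Sxeps}|y-c|\,dy$ by $L^2/2$ instead of your sharper $L^2/4$; both routes yield the same $\tfrac14|\Sxeps|$ bound after choosing the small parameter appropriately.
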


\begin{proof}
The upper bounds are trivial due to $f_\varepsilon'(x)\in \Sxeps$. 
Fix $x\in\Omega_{0}$ and define the mid point of $\Sxeps$ as
    $$m(x):= \frac{\int_{\Sxeps} y    dy}{|\Sxeps|}=\frac{y_M(x)+y_m(x)}{2}.$$
Let $0<\eta<1$. As $u_1$ is uniformly continuous on $\Omega_{1}$ and $u_1\geq \lambda >0$, and as $\sup_{ x  \in \Omega_0}|\Sxeps|\to 0 $ by \cref{pr:qualitative}, there exists $\varepsilon_{0}$ (independent of $x$) such that $\eps\leq \eps_{0}$ implies
\begin{align}\label{eq:densityRatio}
  1-\eta \leq \frac{u_{1}(y)}{u_{1}(y')} \leq 1+\eta \qforallq y,y'\in \Sxeps. 
\end{align} 
For any bounded function $h(y)$, it follows that
\begin{align*}
\left\vert \frac{1}{\mu_1(\Sxeps)}\int_{\Sxeps} h(y)d\mu_1( y )-  \frac{1}{|\Sxeps |}\int_{\Sxeps} h(y) dy \right\vert 
 \leq \frac{\eta}{|\Sxeps |} \int_{\Sxeps} |h(y)| dy.
\end{align*}
Using this with $h(y):=y-m(x)$,
the formula $f_\varepsilon'(x)=\frac{1}{\mu_1 (\Sxeps )}\int_{\Sxeps} y   d\mu_1( y )$ of \cref{ContinuityofT} yields
\begin{align*}
|f'_\varepsilon(x)-m(x)| 
 &=  \left\vert \frac{1}{\mu_1(\Sxeps)}\int_{\Sxeps} (y-m(x))d\mu_1( y )-  \frac{1}{|\Sxeps |}\int_{\Sxeps} (y-m(x)) dy \right\vert \\
 &\leq \frac{\eta}{|\Sxeps |} \int_{\Sxeps} |y-m(x)| dy 
 \leq \frac{\eta}{2|\Sxeps |} \int_{\Sxeps} |\Sxeps| dy  = \frac{\eta}{2} |\Sxeps |.
\end{align*}
(We remark that this conclusion holds for all $\eps>0$, that is with $\eps_{0}=\infty$, if \eqref{eq:densityRatio} holds everywhere on $\Omega_{1}$.) The stated lower bounds are derived via the reverse triangle inequality
\begin{align*}
    f'_\varepsilon(x)-y_m(x)&\geq |m(x)-y_m(x)| -|f'_\varepsilon(x)-m(x)|\\
    &= \frac{|y_M(x)-y_m(x)|}{2} -|f'_\varepsilon(x)-m(x)|
    \geq \frac{1-\eta}{2} |y_M(x)-y_m(x)|
\end{align*}
by taking $\eta=1/2$, and similarly for $y_M(x)$.
\end{proof}

\subsection{Bounds for the second derivative}\label{subse:Bounds2ndDerivative}

We now proceed to bound the second derivative $f''_{\eps}$  in terms of the size of the support. Recall that $\Syeps{y}:=\{ x \in \Omega_0: \ f_\varepsilon( x )+g_\varepsilon( y )-x y \leq 0 \}$. Next, we use the symmetric version of \cref{lemma:distanceBound} with exchanged roles of the marginals: for $\eps<\eps_{0}$ and $y\in\Omega_{1}$,
$$
\frac14|\Syeps{y}|\leq  |g_\varepsilon'(y)-x_{M}(y)| \leq |\Syeps{y}|.
$$
Specifically, we fix $x\in\Omega_{0}$ and use this for $y=y_{M}(x)$. Noting that $x_{M}(y_{M}(x))=x$, we obtain
$$
\frac14|\Syeps{y_{M}(x)}|\leq  |g_\varepsilon'(y_{M}(x))-x| \leq |\Syeps{y_{M}(x)}|.
$$
Similarly,
$$
\frac14|\Syeps{y_{m}(x)}|\leq  |g_\varepsilon'(y_{m}(x))-x| \leq |\Syeps{y_{m}(x)}|.
$$
We now plug these estimates, as well as the estimates stated in \cref{lemma:distanceBound}, into the formula~\eqref{secondderivativeShape} for $f_\varepsilon'' ( x )$. Recalling from \cref{se:SecondOrderDer} that 
\begin{align*}
\mbox{$g_\varepsilon'(y_{M}(x))-x>0$ for all $x\in \Omega_0^{(1)}\cup \Omega_0^{(2)}$},\qquad \mbox{$g_\varepsilon'(y_{m}(x))-x<0$ for all $x\in \Omega_0^{(3)}\cup \Omega_0^{(2)}$,}
\end{align*}
we obtain for all $x\in\Omega_0\setminus \{x^{(m)}, x^{(M)}\}$ that 
\begin{equation}
    \label{secondderivativeUpper}
        f_\varepsilon'' ( x )\leq  C \Lambda \left( \frac{ |\Sxeps|}{\left|\Syeps{y_{m}(x)}\right|}  \chi_{\Omega_0^{(3)}\cup \Omega_0^{(2)} }(x) + \frac{ |\Sxeps|}{\left|\Syeps{y_{M}(x)}\right|}  \chi_{\Omega_0^{(1)}\cup \Omega_0^{(2)} }(x) \right) 
\end{equation}
and 
\begin{equation}
    \label{secondderivativeLower}
        f_\varepsilon'' ( x )\geq  \frac{\lambda}{C}  \left( \frac{ |\Sxeps|}{\left|\Syeps{y_{m}(x)}\right|}  \chi_{\Omega_0^{(3)}\cup \Omega_0^{(2)} }(x) + \frac{ |\Sxeps|}{\left|\Syeps{y_{M}(x)}\right|}  \chi_{\Omega_0^{(1)}\cup \Omega_0^{(2)} }(x) \right). 
\end{equation}
From these two bounds and \cref{Lemma:maximunPrincp}, we see that
\begin{align*}
\sigma_m(f_\varepsilon)&:= \inf_{x\in \Omega_0\setminus \{x^{(m)}, x^{(M)}\}} f_\varepsilon'' ( x ) >0,\\\
\sigma_M(f_\varepsilon)&:= \sup_{x\in \Omega_0\setminus \{x^{(m)}, x^{(M)}\}} f_\varepsilon'' ( x ) <+\infty.
\end{align*}
We define $\sigma_m(g_\varepsilon)$ and  $\sigma_M(g_\varepsilon)$ in a  symmetric manner. Next, we prove that these four quantities are upper and lower bounded uniformly in $\eps$. Recall from~\eqref{eq:solution2} that
\begin{align}
        \label{eq:calcInteg}\varepsilon&=\int (\xi( x ',  f_\varepsilon' ( x )))_+ d\mu_0(x')\geq \lambda  \int_{a_0}^{b_0} (\xi( x ',  f_\varepsilon' ( x )))_+  dx'.
\end{align}
Define the convex conjugate $ f_\varepsilon^*(y):=\sup_{x\in \Omega_0} \left\{   x y  - f_\varepsilon(x) \right\} .$
A second order Taylor development of $f_\varepsilon( x ')$ around $ x $ gives 
\begin{align}\label{eq:Taylor2ndOrder}
 \xi( x ',  f_\varepsilon' ( x ))&=-f_\varepsilon( x ' )-g_\varepsilon( f_\varepsilon' ( x ))+  x '   f_\varepsilon' ( x ) 
 \notag \\
 &=[f_\varepsilon^*( f_\varepsilon' ( x ))-g_\varepsilon( f_\varepsilon' ( x ))]-[f_\varepsilon( x ')+f_\varepsilon^*( f_\varepsilon' ( x ))- x '  f_\varepsilon' ( x ) ] \notag\\
 &\geq [f_\varepsilon^*( f_\varepsilon' ( x ))-g_\varepsilon( f_\varepsilon' ( x ))]- \frac{ \sigma_M(f_\varepsilon) ( x - x ')^2}{2}.
 \end{align} 
 Call $t=[f_\varepsilon^*( f_\varepsilon' ( x ))-g_\varepsilon( f_\varepsilon' ( x ))]$.
As above, the qualitative convergence result \cref{pr:qualitative} yields $\varepsilon_0>0$ such that  $\sup_{y\in \Omega_1}|\Syeps{y}|\leq \frac{1}{2}|\Omega_1|$ for $\varepsilon<\varepsilon_0$, and hence at least one of the conditions $a_0\notin \Syeps{y}$ or $b_0\notin \Syeps{y}$ must hold for any $y$. Assume w.l.o.g.\ that $b_0\notin \Syeps{y}$. Returning to~\eqref{eq:calcInteg} and applying~\eqref{eq:Taylor2ndOrder} to the integrand, we deduce
\begin{align*}
    \varepsilon&\geq \lambda  \int_{x}^{\infty} (\xi( x ',  f_\varepsilon' ( x )))_+  dx'\\
    &\geq \lambda  \int_{x}^{\infty}\left( t- \frac{ \sigma_M(f_\varepsilon) ( x - x ')^2}{2}\right)_+ dx'\\
    &= \lambda  \sigma_M(f_\varepsilon)\int_{0}^{\infty}\left( \frac{t}{\sigma_M(f_\varepsilon)}- \frac{  u^2}{2}\right)_+ d u\\
    &= \frac{2\sqrt{2}}{3} \lambda  \sigma_M(f_\varepsilon) \left(\frac{t}{\sigma_M(f_\varepsilon)}\right)^{3/2}= \frac{2\sqrt{2}}{3}\lambda  (\sigma_M(f_\varepsilon))^{-1/2} t^{3/2}.
\end{align*}
By the same means,
$ \varepsilon\leq \frac{4\sqrt{2}}{3}\Lambda  (\sigma_m(f_\varepsilon))^{-1/2} t^{3/2} $. In summary, for all $\varepsilon<\varepsilon_0$ and $x\in \Omega_0$,
\begin{align*}\frac{2\sqrt{2}}{3}\lambda  (\sigma_M(f_\varepsilon))^{-1/2} t^{3/2}
\leq \varepsilon 
\leq \frac{4\sqrt{2}}{3}\Lambda  (\sigma_m(f_\varepsilon))^{-1/2} t^{3/2}.
\end{align*}
Here
$t=f_\varepsilon^*( f_\varepsilon' ( x ))-g_\varepsilon( f_\varepsilon' ( x )) = \max_{x'\in \Omega_0} \xi( x ',  f_\varepsilon' ( x ))$ due to $\partial_{x'} \xi( x ',  f_\varepsilon' ( x ))= f_\varepsilon' ( x )-f_\varepsilon' ( x' )$. 
Recalling also from \cref{Lemma:comparemax} that 
\[C^{-1}  \max_{ y \in \Omega_1} \xi( x ,  y )\leq  \max_{ x '\in \Omega_0} \xi( x ',f_\varepsilon'( x ))\leq \max_{ y \in \Omega_1} \xi( x , y ) \qforallq x\in \Omega_0,\]
and allowing $C$ to vary from line to line, we deduce
$$
    C^{-1}   (\sigma_M(f_\varepsilon))^{-1/2} \max_{ y \in \Omega_1} (\xi( x ,  y ))_+^{3/2} \leq \varepsilon\leq C (\sigma_m(f_\varepsilon))^{-1/2} \max_{ y \in \Omega_1} (\xi( x ,  y ))_+^{3/2}.
$$
In view of \cref{Lemma:maximunPrincp}, this implies
$$
      C^{-1}   (\sigma_M(f_\varepsilon))^{-1/2} |\Sxeps|^{-3/2} \leq \varepsilon^{-1/2}\leq C (\sigma_m(f_\varepsilon))^{-1/2} |\Sxeps|^{-3/2},
$$
and after rearranging we conclude
 \begin{equation}\label{eq:boundSx} C^{-1}\left(\frac{\varepsilon}{\sigma_M(f_\varepsilon)}\right)^{\frac{1}{3}} \leq  |\Sxeps| \leq C\left(\frac{\varepsilon}{\sigma_m(f_\varepsilon)}\right)^{\frac{1}{3}}. 
 \end{equation}
We observe that \cref{th:rate} will follow from \eqref{eq:boundSx} once uniform bounds for $\sigma_M(f_\varepsilon)$ and $\sigma_m(f_\varepsilon)$ are derived. To established the latter, we plug \eqref{eq:boundSx} into \eqref{secondderivativeUpper} and \eqref{secondderivativeLower} to obtain the following bounds for  all $x\notin \{x^{(m)}, x^{(M)}\}$:
$$ f_\varepsilon'' ( x )\leq  C \left(\frac{\sigma_M(g_\varepsilon)}{\sigma_m(f_\varepsilon)}\right)^{\frac{1}{3}} \quad {\rm and}\quad  f_\varepsilon'' (x)\geq  \frac{1}{C} \left(\frac{\sigma_m(g_\varepsilon)}{\sigma_M(f_\varepsilon)}\right)^{\frac{1}{3}} . $$
By taking the supremum in the first expression and the infimum in the second, we deduce
\begin{align}\label{eq:sigma1}
\sigma_M(f_\varepsilon)\leq  C \left(\frac{\sigma_M(g_\varepsilon)}{\sigma_m(f_\varepsilon)}\right)^{\frac{1}{3}} \quad {\rm and}\quad \sigma_m(f_\varepsilon)\geq  \frac{1}{C} \left(\frac{\sigma_m(g_\varepsilon)}{\sigma_M(f_\varepsilon)}\right)^{\frac{1}{3}}  .
\end{align} 
Symmetrically,
\begin{align}\label{eq:sigma2}
\sigma_M(g_\varepsilon)\leq  C \left(\frac{\sigma_M(f_\varepsilon)}{\sigma_m(g_\varepsilon)}\right)^{\frac{1}{3}} \quad {\rm and}\quad \sigma_m(g_\varepsilon)\geq  \frac{1}{C} \left(\frac{\sigma_m(f_\varepsilon)}{\sigma_M(g_\varepsilon)}\right)^{\frac{1}{3}}  .
\end{align}
Plugging~\eqref{eq:sigma2} into~\eqref{eq:sigma1} yields 
$$ \sigma_M(f_\varepsilon)\leq  C \left(\frac{ \frac{\sigma_M(f_\varepsilon)}{\sigma_m(g_\varepsilon)}}{\frac{\sigma_m(g_\varepsilon)}{\sigma_M(f_\varepsilon)} }\right)^{\frac{1}{9}}=C \left( \frac{\sigma_M(f_\varepsilon)}{\sigma_m(g_\varepsilon)}\right)^{\frac{2}{9}} $$
and 
$$ \sigma_m(g_\varepsilon)\geq  \frac{1}{C} \left(\frac{\frac{\sigma_m(g_\varepsilon)}{\sigma_M(f_\varepsilon)}}{\frac{\sigma_M(f_\varepsilon)}{\sigma_m(g_\varepsilon)} }\right)^{\frac{1}{9}}= \frac{1}{C} \left(\frac{\sigma_m(g_\varepsilon)}{\sigma_M(f_\varepsilon)}\right)^{\frac{2}{9}},$$
which implies
$$ \sigma_M(f_\varepsilon)\leq  C \left( \frac{1}{\sigma_m(g_\varepsilon)}\right)^{\frac{2}{7}}\quad {\rm and}
\quad \sigma_m(g_\varepsilon)\geq   \frac{1}{C} \left(\frac{1}{\sigma_M(f_\varepsilon)}\right)^{\frac{2}{7}}$$
and finally
$$ \sigma_M(f_\varepsilon)\leq  C \sigma_M(f_\varepsilon)^{\frac{4}{49}}
\quad {\rm and}
\quad \sigma_m(g_\varepsilon)\geq   C^{-1}  \sigma_m(g_\varepsilon)^{\frac{4}{49}}$$
for some $C>0$. It follows that $\sigma_M(f_\varepsilon)$ and $\sigma_m(g_\varepsilon)$ are uniformly upper and lower bounded, respectively. Symmetric statements hold for $\sigma_M(g_\varepsilon)$ and $\sigma_m(f_\varepsilon)$, completing the proof of \cref{th:Reg}.  Moreover, \cref{th:rate} now follows from \eqref{eq:boundSx}.

\section{Convergence to unregularized optimal transport}\label{section:ratesPotentials}

The goal of this section is to prove \cref{th:ratesDer} and its corollary. We first prove a rate for the barycentric projection of $\pi_\varepsilon$.

\begin{lemma}\label{le:barycentricProjRate}
    Let $S_\varepsilon(x)$ be the barycentric projection of $\pi_\varepsilon$; that is,
$$ S_\varepsilon(x) : = \int y \left( \frac{\xi( x ,  y )}{\varepsilon}\right)_+ d\mu_1(y). $$
    There exists $C>0$ such that for all $\eps\in (0,1]$,
    $$\|T_0-S_\varepsilon\|_{L^2(\Omega_0,dx)}\leq C 
    \varepsilon^{\frac{1}{3}}.$$
\end{lemma}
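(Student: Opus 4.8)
The plan is to compare the barycentric projection $S_\varepsilon$ with the Monge map $T_0$ by exploiting two facts we already have: first, $S_\varepsilon(x)$ is a weighted average of points in $\Sxeps$, hence lies within distance $|\Sxeps|$ of the center $m(x)$ and within distance $2|\Sxeps|$ of \emph{any} point of $\Sxeps$; second, the support $\Sxeps$ converges to the graph of $T_0$ and by \cref{th:rate} has diameter at most $C\varepsilon^{1/3}$. So pointwise we get $|S_\varepsilon(x) - y| \le C\varepsilon^{1/3}$ for any $y\in\Sxeps$; the remaining task is to control the gap between $\Sxeps$ and $T_0(x)$, i.e.\ the Hausdorff distance $d(\Sxeps, T_0(x))$, in $L^2(dx)$.

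The natural route for that is to bring in the convergence of the \emph{cost}. We know from \cite{EcksteinNutz.22} that $0 \le \QOT_\varepsilon - \OT \le C\varepsilon^{2/3}$ in $d=1$. The idea, adapted from \cite{CarlierPegonTamanini.22}, is to write the transport-cost part of $\pi_\varepsilon$ as $\OT$ plus a nonnegative defect controlled by $\QOT_\varepsilon - \OT$, and to lower bound that defect by (a constant times) $\int |y - T_0(x)|^2 \, d\pi_\varepsilon(x,y)$ using the quantitative stability / strong duality of unregularized OT in the scalar case (the map $y\mapsto$ dual slack is coercive around the graph because $f_0$ is convex and the potentials are Lipschitz). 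Concretely, using the Kantorovich duality inequality $\frac12(x-y)^2 \ge xy - f_0(x) - g_0(y)$ rearranged as $\frac12(x-y)^2 - [\text{something vanishing on }\graph T_0] \ge 0$, one integrates against $\pi_\varepsilon$, uses that $\pi_\varepsilon$ has the right marginals so the linear terms integrate to $\int f_0 \,d\mu_0 + \int g_0\,d\mu_1 = \OT$, and obtains
\begin{equation*}
  \int \Big( \tfrac12 (x-y)^2 - (x y - f_0(x) - g_0(y)) \Big) \, d\pi_\varepsilon \;\le\; \int \tfrac12(x-y)^2\, d\pi_\varepsilon - \OT \;\le\; \QOT_\varepsilon - \OT \;\le\; C\varepsilon^{2/3}.
\end{equation*}
Since the integrand on the left is nonnegative and, by a one-dimensional argument (the OT cost is minimized exactly on the monotone graph, with a quadratic-type lower bound coming from $f_0$ being uniformly convex — which we now know from \cref{th:Reg} passed to the limit), it dominates $c\,|y - T_0(x)|^2$ on the support of $\pi_\varepsilon$, we get $\int |y-T_0(x)|^2\, d\pi_\varepsilon(x,y) \le C\varepsilon^{2/3}$, i.e.\ $\|(\text{something like}) \ y-T_0(x)\|_{L^2(\pi_\varepsilon)} \le C\varepsilon^{1/3}$.

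Finally I would convert this $L^2(\pi_\varepsilon)$ bound into the desired $L^2(\Omega_0,dx)$ bound on $S_\varepsilon - T_0$. Using Jensen's inequality with the conditional law of $\pi_\varepsilon$ given $x$ (whose density is $(\xi(x,\cdot)/\varepsilon)_+ u_1(\cdot)/\,$normalization, but note $S_\varepsilon$ as defined integrates against $\mu_1$ with total mass $1$ by \eqref{eq:solution}),
\begin{equation*}
  |S_\varepsilon(x) - T_0(x)|^2 \;\le\; \int |y - T_0(x)|^2 \Big(\tfrac{\xi(x,y)}{\varepsilon}\Big)_+ d\mu_1(y),
\end{equation*}
so integrating in $x$ against $\mu_0$ (equivalently $dx$, up to the constants $\lambda,\Lambda$ of \cref{as:main}) and using Fubini gives $\|S_\varepsilon - T_0\|_{L^2(\mu_0)}^2 \le \int |y - T_0(x)|^2\, d\pi_\varepsilon \le C\varepsilon^{2/3}$, which is the claim after taking square roots and comparing $\mu_0$ with Lebesgue measure. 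The main obstacle I anticipate is the coercivity step: making rigorous that the dual slack $\frac12(x-y)^2 - (xy - f_0(x)-g_0(y))$ controls $|y-T_0(x)|^2$ uniformly on the relevant region. In one dimension this should follow cleanly from monotone rearrangement (the cost defect of a coupling equals, up to the quadratic bracketing, an $L^2$ distance of quantile functions), but one must be careful that $\pi_\varepsilon$ is supported near the graph — which is exactly what \cref{pr:qualitative} and the diameter bound \cref{th:rate} guarantee — and that the constant does not degenerate as $\varepsilon\to0$, for which the uniform strong convexity of $f_\varepsilon$ (hence of $f_0$ in the limit) from \cref{th:Reg} is the key input.
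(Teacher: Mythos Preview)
Your approach is essentially the paper's: both combine the strong convexity of $g_0$ with the cost rate $\QOT_\varepsilon-\OT\le C\varepsilon^{2/3}$ from \cite{EcksteinNutz.22} and a Jensen step, following \cite{CarlierPegonTamanini.22}. The only difference is the order: the paper applies Jensen to $g_0$ first (obtaining $\int g_0(S_\varepsilon)\,d\mu_0 \le \int g_0\,d\mu_1$) and then invokes strong convexity of $g_0$ around $T_0(x)$; you instead expand the dual slack first (that slack is exactly the Bregman divergence of $g_0$, so your ``coercivity'' is the same strong convexity) to get $\int |y-T_0(x)|^2\,d\pi_\varepsilon\le C\varepsilon^{2/3}$, and then apply Jensen to the square. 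Both orderings are valid.

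There is one concrete slip to fix. In the paper's convention, $f_0,g_0$ are the \emph{reparametrized} (convex) potentials of~\eqref{ex:reparam}, so $\int f_0\,d\mu_0+\int g_0\,d\mu_1$ equals $\int xy\,d\pi_0$, \emph{not} $\OT$; your displayed chain is therefore off by a term of size $\OT$. The correct computation drops the extraneous $\tfrac12(x-y)^2$ and works with the slack alone:
\[
\int \big(f_0(x)+g_0(y)-xy\big)\,d\pi_\varepsilon
=\int xy\,d\pi_0-\int xy\,d\pi_\varepsilon
=\int \tfrac12(x-y)^2\,d\pi_\varepsilon-\OT
\le \QOT_\varepsilon-\OT\le C\varepsilon^{2/3}.
\]
After this correction your coercivity step (Taylor expand the slack in $y$ around $T_0(x)$, using $g_0'(T_0(x))=x$ and $g_0''\ge c>0$) and your Jensen step go through verbatim. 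Finally, your opening paragraph is a red herring: reducing to a bound on $\|d(\Sxeps,T_0(x))\|_{L^2}$ would be circular, since in the paper that bound (\cref{th:ratesDer}(i)) is deduced \emph{from} this lemma, not the other way around.
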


The argument follows \cite[Section~4.2]{CarlierPegonTamanini.22} where a similar result is shown in the context of entropic optimal transport.

\begin{proof}
    Recall that $(f_{0},g_{0})$ denote the potentials for the unregularized optimal transport problem, which are convex conjugates of one another. 
Since 
$ g_0 $
is convex and $ \eps^{-1}(\xi( x ,  y ))_+ d \mu_1( y )$ is a probability measure for each $x\in \Omega_0$ by~\eqref{eq:solution},  Jensen's inequality implies that
$$ \int  g_0( y ) \left( \frac{\xi( x ,  y )}{\varepsilon}\right)_+ d \mu_1( y ) \geq g_0\left(\int  y \left( \frac{\xi( x ,  y )}{\varepsilon}\right)_+ d \mu_1( y ) \right)=g_0\left(S_\varepsilon(x)\right). $$
This yields the inequality in 
\begin{align*}
    0&= \int g_0( y ) d \pi_\varepsilon( x , y ) -  \int g_0( y ) d \mu_1( y ) \\
    &= \int \left(\int  g_0( y ) \left( \frac{\xi( x ,  y )}{\varepsilon}\right)_+ d \mu_1( y ) \right) d\mu_0( x )  -   \int g_0(T_0 ( x)) d \mu_0( x ) \\
    & \geq \int g_0\left(S_\varepsilon(x) \right) d\mu_0( x )  -   \int g_0(T_0 ( x)) d \mu_0( x ) .
\end{align*}
As $g_0$ is strongly convex,
$$
  g_0\left(S_\varepsilon(x) \right) - g_0(T_0 ( x)) \geq (S_\varepsilon(x)-T_0 ( x))g_{0}'(T_{0}(x)) + C^{-1} (S_\varepsilon(x)-T_0 ( x))^{2}
$$
for some $C>0$ and we deduce
$$0\geq  \int     g_0'(T_0( x ) ) \left(S_\varepsilon(x)-  T_0( x ) \right) d \mu_0( x )
     +\lambda C^{-1}\left\| S_\varepsilon -  T_0 \right\|_{L^2(\Omega_0,dx)}^2. $$
In view of $g_0'(T_0( x ))=x $, it follows that
\begin{align*}
    \lambda C^{-1}\left\| S_\varepsilon(x)-  T_0 \right\|_{L^2(\Omega_0,dx)}^2
    & \leq \int    \left(xT_0( x ) -xS_\varepsilon(x) \right) d \mu_0( x )\\
    & =  \int  x T_0 ( x ) d \mu_0( x )- \int    x \int  y  \left( \frac{\xi( x ,  y )}{\varepsilon}\right)_+ d \mu_1( y )  d \mu_0( x )\\
    & =  \int     x  T_0 ( x ) d \mu_0(x )- \int   xy \, d \pi_\varepsilon( x ,  y ) \\
    &  =  \frac{1}{2} \int   (  x - y )^2 d \pi_\varepsilon( x ,  y )-\frac{1}{2} \int (x-T_0(x))^2 d \mu_0(x).
\end{align*}
The last line is the difference between the transport cost of $\pi_{\eps}$ and the optimal transport cost. As the quadratic penalty is nonnegative, that difference is dominated by the difference between the optimal regularized transport cost and the optimal transport cost. By \cite[Corollary~3.14]{EcksteinNutz.22}, the latter satisfies
\begin{equation*}
     \frac{1}{2}\int (x-y)^2 d \pi_{\varepsilon}(x,y)-  \frac{1}{2}\int (x-T_0(x))^2 d \mu_0(x) + {\varepsilon}\left\| \frac{d\pi_\varepsilon}{d(\mu_0\otimes \mu_1)}\right\|_{L^2(\mu_0\otimes \mu_1)}^2  \leq C\varepsilon^{\frac{2}{3}}
\end{equation*}
for all $\eps\in (0,1]$, for some $C>0$. (This rate is sharp by \cite[Proposition~4.4]{EcksteinNutz.22}.) As a result, $\left\| S_\varepsilon(x)-  T_0 \right\|_{L^2(\Omega_0,dx)}^2 \leq C\varepsilon^{\frac{2}{3}}$, which was the claim.
\end{proof}

\begin{proof}[Proof of \cref{th:ratesDer}.]
By \cref{ContinuityofT} we have
$f_\varepsilon' ( x )\in {\Sxeps}$ for all $x\in \Omega_0$. Thus
$$
  \|f'_\varepsilon(x)-T_0(x)\|_{L^2(\Omega_0,dx)} \leq \left\|d\left(\Sxeps , T_0(x)\right)\right\|_{L^{2}(\Omega_{0},dx)}
$$
and hence the second claim in \cref{th:ratesDer} follows from the first claim, namely that 
    $$ {C}^{-1}{\varepsilon^{\frac{1}{3}}}\leq \left\|d\left(\Sxeps , T_0(x)\right)\right\|_{L^{2}(\Omega_{0},dx)} \leq C\varepsilon^{\frac{1}{3}}.$$
Note that the lower bound already follows from \cref{th:rate}. Whereas for the upper bound, in view of \cref{th:rate}, it suffices to exhibit measurable functions $\{S_\varepsilon\}_{\varepsilon>0}$ such that $S_\varepsilon(x)\in \Sxeps$  for all $x\in \Omega_0$ and $\|S_\varepsilon-T_0\|_{L^2(\Omega_0,dx)}\leq C\varepsilon^{\frac{1}{3}}$ for all $\varepsilon\leq\eps_{0}$. By \cref{le:barycentricProjRate}, the barycentric projection has these properties.
\end{proof}

\begin{proof}[Proof of \cref{co:potentialRate}]
We denote by $C > 0$ a constant
that may change from line to line, but is uniform in~$\eps$.
By \eqref{eq:rateDerivative} we have
  \begin{equation}
      \label{eq:rateDerivativeAgain}
      \|f'_\varepsilon-f'_0\|_{L^2(\Omega_0,dx)} \leq C\varepsilon^{\frac{1}{3}}.
  \end{equation}
  Let 
  $$\alpha_{\eps} := \frac{1}{|\Omega_{0}|}\int_{\Omega_{0}} (f_\varepsilon(x)-f_{0}(x))dx.$$ Poincar\'e's inequality \cite[Theorem~1, p.\,275]{Evans.10} applied to~\eqref{eq:rateDerivativeAgain} shows
  \begin{equation}
      \label{eq:PoincareStep}
  \|f_\varepsilon-f_{0} - \alpha_{\eps}\|_{L^2(\Omega_0,dx)} \leq C\|f'_\varepsilon-f'_0\|_{L^2(\Omega_0,dx)} \leq C\varepsilon^{\frac{1}{3}}.
  \end{equation}
 Let 
  $$
    \beta_{\eps} := \int (f_\varepsilon(x)-f_{0}(x)) \mu_{0}(x).
  $$
  The mean $\alpha_{\eps}$ under the Lebesgue measure is not directly comparable to the mean $\beta_{\eps}$ under~$\mu_{0}$. However, using the variational characterization of the mean and \cref{as:main},
  \begin{align*}
    \|f_\eps-f_{0} - \beta_{\eps}\|_{L^{2}(\mu_{0})} = \inf_{\alpha\in\R} \|f_\eps-f_{0} - \alpha\|_{L^{2}(\mu_{0})} \leq C \inf_{\alpha\in\R} \|f_\eps-f_{0} - \alpha\|_{L^{2}(\Omega_0,dx)}
  \end{align*}
  which in conjunction with~\eqref{eq:PoincareStep} yields
  \begin{equation}
      \label{eq:PoincareStep2}
  \|f_\varepsilon-f_{0} - \beta_{\eps}\|_{L^2(\mu_{0})} \leq C\varepsilon^{\frac{1}{3}}.
  \end{equation}
  We argue below that 
  \begin{equation}
      \label{eq:rateExpectationPotential}
      |\beta_{\eps}| \leq C\varepsilon^{\frac{1}{3}}.
  \end{equation}  
  Then, \eqref{eq:PoincareStep2} implies
  $
  \|f_\varepsilon-f_{0}\|_{L^2(\mu_{0})} \leq C\varepsilon^{\frac{1}{3}},
  $  
  and thus   $
  \|f_\varepsilon-f_{0}\|_{L^2(\Omega_0,dx)} \leq C\varepsilon^{\frac{1}{3}}
  $
  by \cref{as:main}.
  Together with \eqref{eq:rateDerivativeAgain}, we now have
  $$
  \|f_\varepsilon-f_{0}\|_{W^{1,2}(\Omega_0)} \leq C\varepsilon^{\frac{1}{3}},
  $$ 
  so that Morrey's inequality \cite[Theorem~4, p.\,266]{Evans.10} yields the claim.  
  
	It remains to show~\eqref{eq:rateExpectationPotential}.
In fact, we prove the stronger claim
  \begin{align}\label{eq:expectClaim}
     2|\beta_{\eps}| = \left| \int (f_\varepsilon(x)- f_{0}(x)) \mu_{0}(x) + \int (g_\varepsilon(y) -g_0(y)) \mu_{1}(y) \right| \leq C\varepsilon^{\frac{2}{3}}
  \end{align} 
  where the equality is due to~\eqref{eq:normalizationPot}.
  Since $\pi_{0}$ is optimal for the transport problem, \eqref{qotIntro} implies 
\begin{equation}\label{eq:penaltyBound}
  \frac{\varepsilon}{2}\left\| \frac{d \pi_{\eps}}{ d(\mu_0\otimes \mu_1)}\right\|^2_{L^{2}(\mu_0\otimes \mu_1)} \leq \QOT_\eps - \OT.
\end{equation}
Next, we use the strong duality of $\QOT_\eps$ (e.g., \cite[Theorem~2.2]{Nutz.24}). The dual expression~\eqref{dual} for $\QOT_\eps$ (taking into account the change of variables~\eqref{ex:reparam}) and the analogous dual of $\OT$ yield
\begin{align*}
  \QOT_\eps - \OT 
  &= \int (f_0(x)- f_{\eps}(x)) \mu_{0}(x) + \int (g_0(y) -g_{\eps}(y)) \mu_{1}(y)  - \frac{\varepsilon}{2}\left\| \frac{d \pi_{\eps}}{ d(\mu_0\otimes \mu_1)}\right\|^2_{L^{2}(\mu_0\otimes \mu_1)}.
\end{align*}
In view of~\eqref{eq:penaltyBound}, this implies
\begin{align*}
  \QOT_\eps - \OT 
  &\leq \int (f_0(x)- f_{\eps}(x)) \mu_{0}(x) + \int (g_0(y) -g_{\eps}(y)) \mu_{1}(y)
   \leq 2(\QOT_\eps - \OT).
\end{align*}
The claim~\eqref{eq:expectClaim} follows as  $C^{-1}\varepsilon^{\frac{2}{3}} \leq  \QOT_\eps - \OT\leq C\varepsilon^{\frac{2}{3}}$ by \cite[Corollary~3.14 and Proposition~4.4]{EcksteinNutz.22}.
\end{proof}

\bibliographystyle{abbrv}
\bibliography{stochfin}
\end{document}